\title{ M\"{o}bius Polynomials of Face Posets of Convex Polytopes}
\author{Meena Jagadeesan, Susan Durst}
\date{\today}
\DeclareMathAlphabet{\mathpzc}{OT1}{pzc}{m}{it}
\theoremstyle{theorem}
\newtheorem{prop}{Proposition}
\newtheorem{theorem}[prop]{Theorem}
\newtheorem{lemma}[prop]{Lemma}
\newtheorem{cor}[prop]{Corollary}
\theoremstyle{definition}
\newtheorem{definition}[prop]{Definition}
\begin{document}

\maketitle

\begin{abstract}

The M\"obius polynomial is an invariant of ranked posets, closely related to the M\"obius function.  In this paper, we study the M\"obius polynomial of face posets of convex polytopes.  We present formulas for computing the M\"obius polynomial of the face poset of a pyramid or a prism over an existing polytope, or of the gluing of two or more polytopes in terms of the M\"obius polynomials of the original polytopes.  We also present general formulas for calculating M\"obius polynomials of face posets of simplicial polytopes and of Eulerian posets in terms of their $f$-vectors and some additional constraints.\end{abstract} 

\section{Introduction}

A \textit{ranked poset} is a poset $P$, together with a rank function $|\cdot|: P\rightarrow \mathbb{N}$, such that whenever $p>q$ and for any $p\geq r\geq q$ we have $p=r$ or $q=r$, we have $|p|=|q|+1$.  Given a finite ranked poset $P$, we define the \textbf{M\"obius polynomial} of $P$ to be
\[\mathcal{M}_P(z)=\sum_{p\leq q\in P}\mu(p,q)z^{|q|-|p|},\]
where $\mu$ is the M\"obius function defined on $P$.

Our interest in the M\"obius polynomial stems from previous work on the splitting algebra or universal labeling algebra of a poset, introduced by Gelfand, Retakh, Serconek, and Wilson in their 2005 paper \cite{AlgebrasAssocToDirGraphs}.  In their 2007 paper \cite{HilbertSeriesPaper}, they showed that the Hilbert series of the algebra $A(P)$ associated to a finite ranked poset $P$ could be calculated using the formula:
\[H(A(P),z)=\frac{1-z}{1-z\mathcal{M}_P(z)}.\]
Using this fact, they were able to calculate the Hilbert series of the algebra associated to the Boolean lattice of order $n$, and to the poset of subspaces of a finite-dimensional vector space over a finite field.  

In 2009, Duffy used the same result to calculate the Hilbert series of the algebra associated to the face poset of an $n$-gon.  She also found that the graded trace generating function for the automorphism of $A(P)$ induced by an automorphism $\sigma$ of the poset $P$ is given by
\[Tr_\sigma(A(P),z)=\frac{1-z}{1-z\mathcal{M}_{P^\sigma}(z)},\]
where $P^\sigma$ is the poset consisting of fixed points of $P$ under the automorphism $\sigma$.

In their 2009 paper \cite{GenLayeredGraphs}, Retakh and Wilson generalize their Hilbert series result to posets which have a weaker rank function $\rho$, which only satisfies the property that if $p<q$, then $\rho(p)<\rho(q)$.  In this setting, they define 
\[\mathcal{M}_P(z)=\sum_{p\leq q\in P}\mu(p,q)z^{\rho(q)-\rho(p)},\]
and they present a generalization of the universal labeling algebra, which they also call $A(P)$, satisfying the same Hilbert series equation.  This paper also contains formulas for calculating the M\"obius polynomial of a poset under various operations, including adding intermediate elements, adding additional relations between existing elements, and joining two posets by identifying their maximal and/or minimial vertices.  In 2014, Durst showed \cite{DirectProductsPaper} that given two posets $P$ and $Q$, the M\"obius polynomial of their direct product $P\times Q$ is given by
\[\mathcal{M}_{P\times Q}(z)=\mathcal{M}_P(z)\mathcal{M}_Q(z),\]
and used this to calculate the Hilbert series and graded trace generating functions associated to the poset of factors of a natural number $n$ ordered by divisibility.  

In this paper we continue the study of M\"obius polynomials, focusing our attention on the face posets of convex polytopes.  The organization of this paper is as follows. In Section 2, we present basic definitions. In Section 3, we present a result regarding the sum of the coefficients of the M\"{o}bius polynomials. In Section 4, we recall known results  about face posets and include a proof that every interval of a face poset is itself a face poset. In Section 5, we proceed to compute the M\"{o}bius polynomial of the face poset of a pyramid over a polytope terms of the M\"{o}bius polynomial of the face poset of the original polytope. In Section 6, we compute the M\"{o}bius polynomial of the face poset of a prism over a polytope in terms of the M\"{o}bius polynomial of the face poset of the original polytope and its bottom polynomial, which we will define in Section 3. In Section 7, we compute the M\"{o}bius polynomial of the face poset of polytopes glued together, in terms of the  M\"{o}bius polynomial of the original polytopes and their top polynomials, which we will define in Section 3. In Section 8, we compute the M\"{o}bius polynomial of the face poset of a simplicial polytope in terms of its $f$-vector. In Section 9, we compute the M\"{o}bius polynomial of an Eulerian poset in terms of its $f$-vector and some additional constraints. This gives us a general formula for computing the M\"{o}bius polynomial of the face poset of an arbitrary convex polytope.

\section{Definitions}
\begin{definition}
A ranked poset is a poset $P$, together with a rank function $| \cdot|: P \rightarrow \mathbb{N}$ satisfying the following properties:
\begin{itemize}
\item[(i)] If $p > q$, and for any $p \ge r \ge q$ we have $r=p$ or $r=q$, then $|p| = |q| + 1$. 
\item[(ii)] If $p$ is minimal in $P$, then $|p| = -1$.
\end{itemize}
\end{definition}

\begin{definition}
In a ranked poset $P$, for each $p \in P$, we call $|p|$ the rank of $p$. We use the following notation:
\begin{align*}
P_i &= \left\{ p \in P \mid |p| =i \right\} \\
P_{\ge i} &= \left\{ p \in P \mid |p| \ge i \right\} \\
P_{\le i} &= \left\{ p \in P \mid |p| \le i \right\}
\end{align*}
We define the rank of a poset $P$ to be $\max\left\{|p| \mid p \in P \right\}$. 
\end{definition}

Notice that this is not the standard definition of rank. In the standard definition, minimal elements have rank $0$. We have set our minimum rank to $-1$ so that rank will correspond to dimension in face posets of convex polytopes. 

\begin{definition}
Given $p, q \in P$ we define the closed interval $[p, q] \in P$ by 
\[[p,q] = \left\{s \in P \mid p \le s \le q \right\}.\]
 If $p \not\leq q$, then $[p ,q]$ is defined to be the empty set.  We define $I(P)$ to be the set of all closed intervals of the poset $P$. 
\end{definition}

\begin{definition}
The M\"{o}bius function $\mu^P: I(P) \rightarrow \mathbb{N}$ is defined by
\begin{itemize}

\item[(i)] $ \mu^P [p, p] = 1$ for $p \in P$\\
\item[(ii)]$\mu^P [p, q] = - \sum_{p \le s < q} \mu^P [p, s]$ for $p < q \in P$ \\
\item[(iii)]$\mu^P [p, q] = 0$ for $p \not\leq q \in P$.
\end{itemize}
\end{definition}
Notice that 
$$\displaystyle \sum_{p \le s < q} \mu^P [p, s] = - \sum_{p < s \le q} \mu^P [s, q],$$
so 
$$\displaystyle \mu^P [p, q] = - \sum_{p < s \le q} \mu^P [s, q].$$
For ease of notation, we will often write $\mu$ rather than $\mu^P$.

\begin{definition}
Given a ranked poset $P$, we define the polynomial M\"{o}bius function $\mu_z^P: I(P) \rightarrow \mathbb{N} (z)$ by 
$$\displaystyle \mu_z^P [p, q] = \mu^P [p, q] z^{|q| - |p|}.$$
\end{definition}
For ease of notation, we will often write $\mu_z$ rather than $\mu_z^P$. Notice that $\mu_1 [p, q] = \mu [p, q]$. 
\begin{definition}
The M\"{o}bius polynomial $\mathcal{M}_P(z)$ of a poset $P$ is given by 
$$\displaystyle \mathcal{M}_P(z)= \sum_{p \le q \in P} \mu_z [p,q].$$ 
\end{definition}

\begin{definition}
Given posets $P$ and $Q$, we define the direct product of $P$ and $Q$ to be
$$\displaystyle P \times Q = \left\{(p, q) \mid p \in P, q \in Q \right\}$$
with ordering relation $\le_{P \times Q}$  defined so that
$(p_1, q_1) \le_{P \times Q} (p_2, q_2)$ if and only if $p_1 \le_P p_2$ and $q_1 \le_Q q_2$. 
\end{definition}

We will use the following result from \cite{DirectProductsPaper}: 

\begin{lemma}
\label{dp}
Given posets $P$ and $Q$ with direct product $P \times Q$, 
$$\displaystyle \mathcal{M}_{P \times Q}(z) = \mathcal{M}_P(z) \cdot \mathcal{M}_Q (z).$$
\end{lemma}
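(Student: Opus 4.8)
The plan is to deduce the identity from two facts about the direct product: the exponent $|q|-|p|$ that appears in $\mathcal{M}$ is additive, and the M\"obius function itself is multiplicative. Granting both, $\mathcal{M}_{P\times Q}(z)$ is a double sum whose summand factors as a product of a term depending only on $P$ and a term depending only on $Q$, and the sum splits accordingly.

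First I would record the rank function on $P\times Q$. A pair $(p_1,q_1)$ is covered by $(p_2,q_2)$ precisely when either $p_1$ is covered by $p_2$ and $q_1=q_2$, or $p_1=p_2$ and $q_1$ is covered by $q_2$; moreover $(p,q)$ is minimal in $P\times Q$ exactly when $p$ and $q$ are minimal. From this one checks that $|(p,q)| := |p|+|q|+1$ is a rank function in the sense of our definition (a minimal pair gets rank $-1+(-1)+1 = -1$, and each cover step raises the rank by one). Hence for any interval $[(p_1,q_1),(p_2,q_2)]$ in $P\times Q$ we have $|(p_2,q_2)|-|(p_1,q_1)| = (|p_2|-|p_1|)+(|q_2|-|q_1|)$, so $z^{|(p_2,q_2)|-|(p_1,q_1)|} = z^{|p_2|-|p_1|}\,z^{|q_2|-|q_1|}$.

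Next, and this is the \emph{crux}, I would prove that
\[\mu^{P\times Q}\bigl[(p_1,q_1),(p_2,q_2)\bigr] = \mu^P[p_1,p_2]\cdot\mu^Q[q_1,q_2]\]
for all $p_1\le p_2$ in $P$ and $q_1\le q_2$ in $Q$. The argument is to let $\nu[(p_1,q_1),(p_2,q_2)]$ denote the right-hand side and check that $\nu$ obeys the recursion characterizing $\mu^{P\times Q}$. The base case $\nu[(p,q),(p,q)] = 1$ is immediate. For $(p_1,q_1) < (p_2,q_2)$, the interval $[(p_1,q_1),(p_2,q_2)]$ is the set of pairs $(s,t)$ with $p_1\le s\le p_2$ and $q_1\le t\le q_2$, so
\[\sum_{(p_1,q_1)\le (s,t)\le (p_2,q_2)}\nu\bigl[(p_1,q_1),(s,t)\bigr] = \Bigl(\sum_{p_1\le s\le p_2}\mu^P[p_1,s]\Bigr)\Bigl(\sum_{q_1\le t\le q_2}\mu^Q[q_1,t]\Bigr).\]
Because $(p_1,q_1)<(p_2,q_2)$ in the product order, at least one of $p_1<p_2$ or $q_1<q_2$ holds, and the corresponding factor vanishes, since the defining identity $\mu[a,b] = -\sum_{a\le s<b}\mu[a,s]$ is exactly $\sum_{a\le s\le b}\mu[a,s]=0$ whenever $a<b$. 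Thus $\nu$ satisfies the same recursion as $\mu^{P\times Q}$, and since that recursion determines the M\"obius function uniquely (formally, by induction on $|q|-|p|$), we get $\nu = \mu^{P\times Q}$.

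Finally I would substitute the rank additivity and this factorization into the definition of $\mathcal{M}_{P\times Q}(z)$ and split the resulting sum over the independent index sets $\{p_1\le p_2\}\subseteq I(P)$ and $\{q_1\le q_2\}\subseteq I(Q)$, obtaining $\mathcal{M}_{P\times Q}(z) = \mathcal{M}_P(z)\cdot\mathcal{M}_Q(z)$. The only step that is not pure bookkeeping is the multiplicativity of the M\"obius function; the rest follows by unwinding definitions. (Since the statement is quoted here as a known result from \cite{DirectProductsPaper}, one may alternatively simply invoke that reference.)
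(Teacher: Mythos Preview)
Your argument is correct and is the standard proof: additivity of rank on $P\times Q$ plus multiplicativity of the M\"obius function over products, then Fubini on the double sum. The one thing to note is that the paper does not actually prove this lemma at all; it is stated immediately after ``We will use the following result from \cite{DirectProductsPaper}'' and is simply imported from that reference. So there is no proof in the paper to compare against, and your closing parenthetical already anticipates this. If anything, your write-up supplies what the paper omits.
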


\section{Coefficients of M\"obius Polynomials}

\begin{definition}
Let $-1 \le r \le |P|$. We define the $r$-polynomial $g_r^P(z)$ of the poset $P$ to be \[\sum_{|p| \le r \le |q|} \mu_z^P [p, q].\] We refer to $g_{|P|}^P(z)$ as the top polynomial and $g_{-1}^P(z)$ as the bottom polynomial.
\end{definition}

\begin{lemma}
\label{Muiszero}
For any $p, q \in P$ 
$$\displaystyle \sum_{p \le s \le q} \mu [p, s] = \sum_{p \le s \le q} \mu [s, y] = 0.$$ 
\end{lemma}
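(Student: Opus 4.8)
The plan is to obtain both displayed identities directly from the recursive definition of $\mu^P$, using the two symmetric forms of that recursion for the two equalities. Before doing so I would pin down the scope of the statement: if $p \not\le q$ then $[p,q]=\emptyset$ and both sums are empty, hence $0$; and the identity plainly fails when $p=q$, since then the first sum is $\mu[p,p]=1$. So I read the hypothesis as $p<q$ and would say so explicitly (the $p\not\le q$ case being vacuously covered).

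For the first equality, assume $p<q$ and split off the top term $s=q$:
$$\sum_{p \le s \le q} \mu[p,s] \;=\; \mu[p,q] \;+\; \sum_{p \le s < q}\mu[p,s].$$
By clause (ii) of the definition of the M\"obius function, $\mu[p,q] = -\sum_{p \le s < q}\mu[p,s]$, so the right-hand side is $0$. For the second equality I would argue symmetrically, peeling off the bottom term $s=p$:
$$\sum_{p \le s \le q}\mu[s,q] \;=\; \mu[p,q] \;+\; \sum_{p < s \le q}\mu[s,q],$$
and then invoke the relation $\mu[p,q] = -\sum_{p < s \le q}\mu[s,q]$ recorded immediately after the definition of $\mu^P$ (which in turn comes from the double-counting remark $\sum_{p\le s<q}\mu[p,s] = -\sum_{p<s\le q}\mu[s,q]$). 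This again yields $0$.

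I do not expect any real obstacle here: the argument is pure bookkeeping with the defining recursion, and finiteness of $P$ makes all the sums legitimate. The only point demanding care is the boundary behavior — the $p=q$ and $p\not\le q$ cases — so that the quantifier ``for any $p,q\in P$'' is interpreted (and, if one wishes, restated) correctly as applying when $p<q$.
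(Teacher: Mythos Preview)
Your proposal is correct and follows essentially the same route as the paper's proof: both equalities are obtained by isolating the endpoint term ($s=q$ for the first, $s=p$ for the second) and then cancelling it against the corresponding recursive form of $\mu[p,q]$. Your added remark that the statement requires $p<q$ (since for $p=q$ the sum equals $1$) is a genuine clarification the paper omits.
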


\begin{proof}
By definition, we have $$\displaystyle \mu [p, q] = -\sum_{p < s \le y} \mu [s, q]$$ \\
This implies $$\displaystyle\sum_{p \le s \le q} \mu [s, q] = 0.$$ \\
We also have $$\displaystyle\mu [p, q] = -\sum_{p \le s < q} \mu [s, q],$$ 
which implies $$\displaystyle\sum_{p \le s \le q} \mu [p, s] = 0.$$ \\
\end{proof}

\begin{lemma}
\label{unmaxmin}
If a poset P has a unique maximum element or unique minimum element, then $\mathcal{M}_P (1) = 1$. 
\end{lemma}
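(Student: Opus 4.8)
The plan is to evaluate $\mathcal{M}_P(1)=\sum_{p\le q\in P}\mu[p,q]$ by reorganizing this double sum so that the vanishing identity of Lemma~\ref{Muiszero} can be applied one outer term at a time, with the unique extreme element peeled off to account for the leftover value $1$. Suppose first that $P$ has a unique minimum element $\hat 0$. Grouping the sum according to the upper endpoint $q$, I would write
\[\mathcal{M}_P(1)=\sum_{q\in P}\ \sum_{p\le q}\mu[p,q].\]
Because $\hat 0$ is the unique minimum, every $p$ with $p\le q$ automatically satisfies $\hat 0\le p$, so the inner sum is exactly $\sum_{\hat 0\le p\le q}\mu[p,q]$.

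Next I would split into two cases on $q$. If $q=\hat 0$, the inner sum consists of the single term $\mu[\hat 0,\hat 0]=1$. If $q\neq\hat 0$, then $\hat 0<q$, and the identity $\sum_{\hat 0\le s\le q}\mu[s,q]=0$ supplied by Lemma~\ref{Muiszero} shows the inner sum is $0$. Summing over all $q\in P$, the only nonzero contribution is the $1$ coming from $q=\hat 0$, whence $\mathcal{M}_P(1)=1$.

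For the case where $P$ has a unique maximum element $\hat 1$, I would run the symmetric argument, this time grouping by the lower endpoint $p$, so that $\mathcal{M}_P(1)=\sum_{p\in P}\sum_{p\le q\le\hat 1}\mu[p,q]$; the term $p=\hat 1$ contributes $\mu[\hat 1,\hat 1]=1$, and every term with $p<\hat 1$ contributes $0$ by the other vanishing identity $\sum_{p\le s\le\hat 1}\mu[p,s]=0$ of Lemma~\ref{Muiszero}.

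There is no serious obstacle here; the one point that needs care is that Lemma~\ref{Muiszero} produces $0$ only for proper intervals (where the endpoints are distinct), so the extreme element $\hat 0$ (respectively $\hat 1$) must be treated separately, and it is precisely this surviving singleton interval that yields the final value $1$.
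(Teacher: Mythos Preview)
Your argument is correct, and it is genuinely different from the route the paper takes. The paper proves the unique-maximum case by induction on the rank of $P$: at each step it writes $\mathcal{M}_P(1)=g_{-1}^P(1)+\mathcal{M}_{P_{\ge 0}}(1)$, uses Lemma~\ref{Muiszero} to show that $g_{-1}^P(1)=\sum_{p\in P_{-1}}\sum_{p\le q\le\hat 1}\mu[p,q]=0$, and then invokes the inductive hypothesis on the smaller poset $P_{\ge 0}$. In effect the paper strips off one rank layer at a time. You instead group the entire double sum by the \emph{far} endpoint (the one opposite the distinguished extreme element) and kill every inner sum in a single shot with Lemma~\ref{Muiszero}, leaving only the singleton interval $[\hat 0,\hat 0]$ (respectively $[\hat 1,\hat 1]$). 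Your approach is shorter and avoids induction altogether; the paper's approach has the incidental benefit of exercising the $g_{-1}^P$ machinery that is reused later, but as a proof of this lemma yours is the cleaner one. Your closing remark that Lemma~\ref{Muiszero} only applies to proper intervals is exactly the right caveat.
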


\begin{proof}
First we will prove the result for a poset $P$ with unique maximum element $\hat{1}$. We proceed by induction on $|P|$. If $|P| = -1$, then 
$$\displaystyle \mathcal{M}_P (z) = \mu_z [\hat{1}, \hat{1}] = 1,$$
and it follows that $\mathcal{M}_P (z) = 1$. \\
Now assume that $|P| \ge 0$, and that the result holds for posets with smaller rank. No interval in $P$ can contain more than one element of $P_{-1}$ since there do not exist $p, q \in P_{-1}$ with $p < q$. Hence $g_{-1}^P (z)$ is the sum of the generalized M\"{o}bius functions of all intervals in $P$ that contain one of the $p$ in $P_{-1}$ with no interval counted more than once. Thus
$$\displaystyle \mathcal{M}_P (1) = g_{-1}^P (1) + \mathcal{M}_{P_{\ge 0}} (1).$$
We have 
\begin{align*}
g_{-1}^P (1) &= \sum_{|p| \le -1 \le |q|} \mu_z [p, q] \\
&= \sum_{p \in P_{-1}} \sum_{p \le q \le \hat{1}} \mu_z [p, q].
\end{align*}
This is equal to $0$ by Lemma~\ref{Muiszero}. \\
This gives us 
$$\displaystyle \mathcal{M}_P (1) = \mathcal{M}_{P_{\ge 0}} (1).$$
By the induction hypothesis, $\mathcal{M}_{P_{\ge 0}} (1) = 1$. Thus for any $P$ with a unique maximum  element, $\mathcal{M}_P (1) = 1$. The proof for $P$ with a unique minimum element is completely analogous. 
\end{proof}

\begin{definition}
For a  poset $P$, we define the poset $\tilde{P}$ to be $P \cup \left\{\hat{0} \right\} \cup \left\{\hat{1} \right\}$ with ordering relation $\le_{\tilde{P}}$ given by: \\
For $p, q \in P$, $p \le_{\tilde{P}} q$ if $p \le_P q$. \\
For $p \in \tilde{P}$, $p \le_{\tilde{P}} {\hat{1}}$ and $\hat{0}\le_{\tilde{P}} p$.
\end{definition} 

\begin{lemma}
For any poset $P$, 
$\mathcal{M}_P (1)  = \mu^{\tilde{P}} [\hat{0},  \hat{1}] + 1.$ 
\end{lemma}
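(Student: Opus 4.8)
The plan is to apply Lemma~\ref{unmaxmin} to the poset $\tilde{P}$. Since $\tilde{P}$ has both a unique maximum element $\hat{1}$ and a unique minimum element $\hat{0}$, that lemma gives $\mathcal{M}_{\tilde{P}}(1) = 1$. The strategy is then to expand $\mathcal{M}_{\tilde{P}}(1) = \sum_{p \le q \in \tilde{P}} \mu^{\tilde{P}}[p,q]$ by partitioning the closed intervals of $\tilde{P}$ according to whether or not they involve the adjoined elements $\hat{0}$ and $\hat{1}$, and to show that the contribution of the intervals lying entirely within $P$ equals $\mathcal{M}_P(1)$, while the contribution of all remaining intervals equals $-\mu^{\tilde{P}}[\hat{0},\hat{1}]$.

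First I would observe that for $p, q \in P$ with $p \le q$, the closed interval $[p,q]$ computed in $\tilde{P}$ coincides with the one computed in $P$, since adjoining a global maximum and a global minimum introduces no element strictly between two elements of $P$. Because the recursive definition of $\mu$ depends only on an interval regarded as a poset in its own right, it follows (by induction on the size of the interval) that $\mu^{\tilde{P}}[p,q] = \mu^{P}[p,q]$ for all $p \le q$ in $P$, so $\sum_{p \le q \in P} \mu^{\tilde{P}}[p,q] = \mathcal{M}_P(1)$.

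Next I would enumerate the remaining intervals of $\tilde{P}$: the singletons $[\hat{0},\hat{0}]$ and $[\hat{1},\hat{1}]$, each contributing $1$; the interval $[\hat{0},\hat{1}]$, contributing $\mu^{\tilde{P}}[\hat{0},\hat{1}]$; the intervals $[\hat{0},p]$ for $p \in P$; and the intervals $[p,\hat{1}]$ for $p \in P$. To handle the last two families I would apply Lemma~\ref{Muiszero} to the interval $[\hat{0},\hat{1}]$ of $\tilde{P}$, using that $\{s \mid \hat{0} \le s \le \hat{1}\}$ is all of $\tilde{P}$. This yields $1 + \sum_{p \in P} \mu^{\tilde{P}}[\hat{0},p] + \mu^{\tilde{P}}[\hat{0},\hat{1}] = 0$ and, symmetrically, $1 + \sum_{p \in P} \mu^{\tilde{P}}[p,\hat{1}] + \mu^{\tilde{P}}[\hat{0},\hat{1}] = 0$. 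Substituting these two identities back into the expansion, the constant terms and the terms in $\mu^{\tilde{P}}[\hat{0},\hat{1}]$ collapse to $-\mu^{\tilde{P}}[\hat{0},\hat{1}]$, giving $1 = \mathcal{M}_{\tilde{P}}(1) = \mathcal{M}_P(1) - \mu^{\tilde{P}}[\hat{0},\hat{1}]$, which rearranges to the desired equality $\mathcal{M}_P(1) = \mu^{\tilde{P}}[\hat{0},\hat{1}] + 1$.

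The main point requiring care is not really an obstacle but a bookkeeping matter: verifying that the partition of the intervals of $\tilde{P}$ above is exhaustive and counts each interval exactly once, and that $\mu$ is intrinsic to each interval so that passing from $P$ to $\tilde{P}$ leaves $\mu$ unchanged on intervals of $P$. The degenerate cases where $P$ is empty or a single point can be checked directly against the formula and cause no trouble.
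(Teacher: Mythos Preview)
Your proposal is correct and follows essentially the same route as the paper's own proof: both apply Lemma~\ref{unmaxmin} to $\tilde{P}$ to obtain $\mathcal{M}_{\tilde{P}}(1)=1$, then decompose $\mathcal{M}_{\tilde{P}}(1)$ into $\mathcal{M}_P(1)$ plus contributions from intervals touching $\hat{0}$ or $\hat{1}$, and use Lemma~\ref{Muiszero} to reduce those extra contributions to $-\mu^{\tilde{P}}[\hat{0},\hat{1}]$. The only difference is cosmetic: the paper groups the extra terms as the two full sums $\sum_{p\in\tilde{P}}\mu[\hat{0},p]$ and $\sum_{p\in\tilde{P}}\mu[p,\hat{1}]$ with one inclusion--exclusion correction, whereas you separate out the singleton intervals $[\hat{0},\hat{0}]$, $[\hat{1},\hat{1}]$ and $[\hat{0},\hat{1}]$ explicitly; your version is arguably cleaner since it also makes explicit the (tacit in the paper) fact that $\mu^{\tilde{P}}$ agrees with $\mu^{P}$ on intervals of $P$.
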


\begin{proof}
For any $p \in \tilde{P}$, we have $\hat{0} \le p \le \hat{1}.$  
Hence,
$$\displaystyle \mathcal{M}_{\tilde{P}} (1) = \sum_{\hat{0} \le p \le \hat{1}} \mu [\hat{0}, p] + \sum_{\hat{0} \le p \le \hat{1}} \mu [p, \hat{1}] - \mu [\hat{0}, \hat{1}] + \mathcal{M}_P (1).$$
By Lemma~\ref{Muiszero}, 
$$\displaystyle \sum_{\hat{0} \le p \le \hat{1}} \mu [\hat{0}, p] = \sum_{\hat{0} \le p \le \hat{1}} \mu [p, \hat{1}] = 0.$$
This means that
$$\displaystyle \mathcal{M}_{\tilde{P}} (1) = \mathcal{M}_P (1)  - \mu [\hat{0}, \hat{1}].$$

By Lemma~$\ref{unmaxmin}$, 
$$\displaystyle
 \mathcal{M}_{\tilde{P}} (1) = 1.$$

Hence, 
$$\displaystyle 1 = \mathcal{M}_P (1)  - \mu [\hat{0},  \hat{1}].$$

It follows that
$$\displaystyle \mathcal{M}_P (1)  = 1 + \mu [\hat{0},  \hat{1}].$$

\end{proof}

\section{Face Posets}

\begin{definition}
Given a  convex polytope $\mathpzc{P}$ with vertex set $V$, for each face $\mathpzc{p}$, we define $p_V$ to be the set of vertices contained in $\mathpzc{p}$. Hence, 
$$\displaystyle p_v = \left\{ v \in V \mid v \in \mathpzc{p}\right\}.$$
We define the face poset $P$ to be: 
$$\displaystyle \left\{p_v \mid \mathpzc{p} \in \mathpzc{P} \right\} \cup \left\{\emptyset \right\}$$
with ordering given by set inclusion. 
\end{definition}
For ease of notation, we will often write $p$ rather than $p_v$ in a face poset. \\
Notice that with our definition of the rank function, an $r$-dimensional face $\mathpzc{p}$ of a polytope $\mathpzc{P}$ corresponds to a rank $r$ element $p$ in the face poset $P$. 

\begin{definition}
In a $d$-dimensional convex polytope $\mathpzc{P}$, we define $\mathpzc{f_r}$ to be the number of $r$-dimensional faces of $\mathpzc{P}$ for $-1 \le r \le d$. We define $\mathpzc{f_{-1}}$ to be $1$. We call $\mathpzc{(f_{-1}, f_0,...,f_d)}$ the $\mathpzc{f}$-vector of $\mathpzc{P}$. 
\end{definition}

\begin{definition}
In an arbitrary poset $P$, we define $f_r$ to be the number of elements of rank $r$, $|P_r|$. We call $(f_{-1}, f_0,...,f_d)$ the $f$-vector of $P$. 
\end{definition}

For a convex polytope $\mathpzc{P}$ with face poset $P$, this means that $\mathpzc{f_r}$ represents the number of $r$-dimensional faces of $\mathpzc{P}$ and $f_r$ represents the number of elements of rank $r$ in its face poset $P$. Hence, the $\mathpzc{f}$-vector of $\mathpzc{P}$ is equal to the $f$-vector of $P$. 

\begin{definition}
Given a poset $P$, we define the dual poset $P^{*}$ to be the set $P$ with ordering relation as follows: $p \le_{P^{*}} q$ if and only if $q \le_P p$. 
\end{definition}
Note that the poset $P^{**}$ is isomorphic to the poset $P$.\\

By part (iv) of Theorem 2.7 in \cite{lec}, we know that the dual poset $P^{*}$ of a face poset $P$ of a convex polytope $\mathpzc{P}$ is itself a face poset of a convex polytope. We call this polytope the dual polytope of $\mathpzc{P}$, denoted by $\mathpzc{P^{*}}$.

\begin{theorem}
Any interval of a face poset $P$ of a convex polytope is itself a face poset of some convex polytope.
\end{theorem}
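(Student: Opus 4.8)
The plan is to identify, for any interval $[p,q]$ in the face poset $P$ of a convex polytope $\mathpzc{P}$, a concrete convex polytope whose face poset is isomorphic to $[p,q]$. I would split into the two cases that matter. First, if $p = \emptyset$ (the bottom element of $P$), then $[p,q] = [\emptyset, q]$ is by definition the face poset of the face $\mathpzc{q}$ itself, viewed as a convex polytope of dimension $|q|$ in its own right; faces of $\mathpzc{q}$ are exactly the faces of $\mathpzc{P}$ contained in $\mathpzc{q}$, so nothing needs to be checked. The substantive case is $p \neq \emptyset$, i.e. $\mathpzc{p}$ is an honest nonempty face of $\mathpzc{P}$, and we must understand the interval $[\mathpzc{p}, \mathpzc{q}]$ of faces lying between $\mathpzc{p}$ and $\mathpzc{q}$.

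For the case $p \neq \emptyset$, I would first reduce to $q = \hat{1}$, the top element (the whole polytope): since $[\mathpzc{p},\mathpzc{q}]$ inside $P$ equals the interval $[\mathpzc{p}, \mathpzc{q}]$ inside the face poset of the polytope $\mathpzc{q}$ (already handled to be a face poset by the previous reduction), it suffices to treat intervals of the form $[\mathpzc{p}, \hat{1}]$. The key classical fact is that the interval $[\mathpzc{p}, \hat{1}]$ in the face lattice of a polytope is isomorphic to the face lattice of the \emph{quotient polytope} $\mathpzc{P}/\mathpzc{p}$ (equivalently, the \emph{vertex figure} iterated, or the face figure of $\mathpzc{p}$), which is again a convex polytope of dimension $d - |p| - 1$. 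Concretely, one may realize $\mathpzc{P}/\mathpzc{p}$ by intersecting $\mathpzc{P}$ with an affine subspace that meets a small neighborhood of a relative interior point of $\mathpzc{p}$ transversally, or by the standard projective/polar construction; the faces of $\mathpzc{P}$ containing $\mathpzc{p}$ correspond bijectively and order-preservingly to the faces of this quotient polytope. Rather than reprove this from scratch, I would invoke it via the polar dual: $[\mathpzc{p},\hat{1}]$ in $P$ is the order dual of an interval $[\hat{0}, \mathpzc{p}^{*}]$ in $P^{*}$, and by the earlier remark (Theorem 2.7(iv) of \cite{lec}) $P^{*}$ is the face poset of the dual polytope $\mathpzc{P^{*}}$; an interval $[\hat{0}, \mathpzc{p}^{*}]$ in a face poset is the face poset of the face $\mathpzc{p}^{*}$, hence a face poset, and dualizing once more and citing Theorem 2.7(iv) again shows $[\mathpzc{p},\hat{1}]$ is a face poset.

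So the structure is: (1) handle $[\emptyset, q]$ directly as the face poset of $\mathpzc{q}$; (2) use this to reduce general $[p,q]$ to the case $q = \hat{1}$; (3) dualize to turn $[p,\hat{1}]$ into $[\hat{0}, p^{*}]$ in $P^{*}$, which is a face poset by (1), and dualize back using that the dual of a face poset is a face poset. The main obstacle is purely one of which black box to cite: the cleanest self-contained argument needs the statement that an interval $[\hat 0, q]$ of a face poset is the face poset of the corresponding face together with the already-quoted duality result; everything else is bookkeeping about rank functions matching dimension. I would be careful to check that the rank shifts correctly under all these operations — in $[\emptyset,q]$ ranks are inherited unchanged, while after dualizing an interval $[p,\hat 1]$ of rank-length $d-|p|-1$ the ranks must be renormalized so the minimum element of $[\hat 0, p^{*}]$ has rank $-1$ — but this is routine since the Möbius polynomial and the face-poset property depend only on the order structure, and the renormalization is forced by Definition of the rank function.
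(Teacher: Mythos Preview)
Your proposal is correct and follows essentially the same route as the paper: restrict to the lower interval $[\hat 0, q]$ (the face poset of $\mathpzc{q}$), pass to the dual polytope so that the upper interval $[p,\hat 1]$ becomes a lower interval $[\hat 0, p^{*}]$, recognize that as the face poset of a face, and dualize once more. The paper's proof is terser and does not separate out the reduction to $q=\hat 1$ as an explicit step, but the logical content and the two invocations of Theorem~2.7(iv) from \cite{lec} are identical.
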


This result is part (ii) of Theorem 2.7 in \cite{lec}. Ziegler provides a geometric proof of this result. Here we present a combinatorial proof. 

\begin{proof}
Consider the interval $[p,q]$. Note that the interval $[\hat{0}^P, q]$ is the face poset of $\mathpzc{q}$. Call this subposet $Q$. Consider the dual poset $Q^{*}$. This corresponds to the face poset of the dual polytope $q^{*}$. Note that $\hat{0}^{Q^{*}} = q$. Since $p \le q$ in $Q$, $\hat{0}^{Q^{*}} = q \le p$ in $Q^{*}$. The interval $[\hat{0}^{Q^{*}}, p]$ is the face poset of the convex polytope into which $\mathpzc{p}$ gets transformed in the dual polytope $\mathpzc{q}^{*}$. Call this polytope $\mathpzc{s}$. Now, consider $Q^{**} = Q$. The interval $[p, q] \in Q$ is the face poset of the dual polytope $\mathpzc{s}^{*}$. 
\end{proof}

\begin{theorem}
\label{Faceuler}
In any face poset $P$, $\mu [p, q] = (-1)^{|q| - |p|}$
\end{theorem}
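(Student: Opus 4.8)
The plan is to induct on $|q| - |p|$, reducing the inductive step to the Euler--Poincar\'e relation for convex polytopes. Fix $p \le q$ in a face poset $P$. By the previous theorem the interval $[p,q]$ is itself the face poset of a convex polytope $\mathpzc{R}$. Since the restriction of $|\cdot|$ to $[p,q]$ still increases by $1$ along covers, it is (up to the additive shift by $|p|+1$) the intrinsic rank function of the face poset of $\mathpzc{R}$, which an order-isomorphism must respect; hence $\mathpzc{R}$ has dimension $d := |q| - |p| - 1$, and for $-1 \le j \le d$ the number of $s$ with $p \le s \le q$ and $|s| = |p| + 1 + j$ equals $\mathpzc{f_j}$, the number of $j$-dimensional faces of $\mathpzc{R}$. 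In particular $p$ corresponds to the index $j = -1$ (with $\mathpzc{f_{-1}} = 1$) and $q$ to the index $j = d$ (with $\mathpzc{f_d} = 1$).

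If $|q| - |p| = 0$ then $p = q$ and $\mu[p,p] = 1 = (-1)^0$, which is the base case. Assume now that $p < q$ and that the formula holds for every interval of smaller height. Applying the recursion $\mu[p,q] = -\sum_{p \le s < q} \mu[p,s]$, substituting the induction hypothesis $\mu[p,s] = (-1)^{|s| - |p|}$ (legitimate since $|s| - |p| < |q| - |p|$), and collecting terms according to the index $j$ with $|s| = |p| + 1 + j$, we obtain
\[ \mu[p,q] = -\sum_{j=-1}^{d-1} (-1)^{j+1}\,\mathpzc{f_j} = \sum_{j=-1}^{d-1} (-1)^{j}\,\mathpzc{f_j}. \]
The Euler--Poincar\'e relation for the $d$-polytope $\mathpzc{R}$ asserts $\sum_{j=-1}^{d} (-1)^j \mathpzc{f_j} = 0$, so the right-hand side equals $-(-1)^d \mathpzc{f_d} = -(-1)^d = (-1)^{d+1} = (-1)^{|q| - |p|}$, completing the induction.

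I expect the main obstacle to be one of citation and bookkeeping rather than of ideas: the Euler--Poincar\'e relation is a genuine theorem, but it is standard and I would simply invoke it (for instance from \cite{lec}) rather than reprove it, since the content here is the reduction to it. The points requiring care in the write-up are verifying that the order-isomorphism between $[p,q]$ and the face poset of $\mathpzc{R}$ matches ranks with face-dimensions exactly as claimed, and that the endpoints $p$ and $q$ are correctly accounted for as the empty face and the full polytope of $\mathpzc{R}$, so that the summation limits come out right. One could instead bypass the interval theorem by identifying $[p,q]$ with the face figure $\mathpzc{q}/\mathpzc{p}$ directly, or bypass the Euler relation via shellability of polytope boundaries; but the argument above is the most economical given what has already been established.
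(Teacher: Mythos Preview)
Your proof is correct and follows essentially the same route as the paper's: both reduce via the interval theorem to computing $\mu[\hat{0},\hat{1}]$ for a face poset, induct on the height, expand the M\"obius recursion over the lower interval, and close with the Euler--Poincar\'e relation. Your write-up is in fact a bit more careful about matching ranks in $[p,q]$ with face-dimensions of $\mathpzc{R}$ than the paper's own version.
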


\begin{proof}
By the above theorem, it suffices to prove that $\mu [\hat{0}^P, \hat{1}^P] = (-1)^{d+1}]$ for all convex polytopes $P$ of dimension $d$ for $d \ge 0.$
We proceed by induction on $|q| - |p|$. \\
For our base cases, we know that $\mu [p,p] = 1$. Consider a convex polytope with dimension $d = 0$. Note that the face poset of this polytope is a totally ordered set of size 2. Hence $\mu [p,q] = -1$. \\
Now, assume that the statement holds for $d \le i$ We will show that it holds for $d = i+1$. 
Note that
$$\displaystyle \mu [\hat{0}^P, \hat{1}^P] = -\sum_{p \in P} \mu [\hat{0}^P, p].$$
By the induction hypothesis, 
\begin{align*}
 \mu [\hat{0}^P, \hat{1}^P]  &= -\sum_{r=-1}^k f_i * (-1)^{r+1} \\
 &= -[1 -\sum_{r=0}^i f_r * (-1)^{r}].
\end{align*}
Using the Euler characteristic of convex polytopes: 
\begin{align*}
 \mu [\hat{0}^P, \hat{1}^P] &= -[1 -((-1)^{i+2} + 1)] \\
  &= -[-(-1)^{i+2}] \\
 &= (-1)^{i+2}. 
\end{align*}
Hence $\mu_z [p, q] = (-z)^{|q| - |p|}$.
\end{proof}

\begin{lemma}
\label{facepoly}
In a poset $P$ with rank $d$, 
$$\displaystyle g_{|P|}^P (z) = \sum_{r=-1}^{d} f_r \cdot (-z)^{d-r}$$
$$\displaystyle g_{-1}^P (z) = \sum_{r=-1}^d f_r \cdot (-z)^{r+1}.$$
\end{lemma}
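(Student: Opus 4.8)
The plan is to compute each polynomial directly from the definition of $g_r^P(z)$, using two structural facts: the face poset $P$ of a $d$-dimensional polytope has a unique maximum element $\hat 1^P$ (of rank $d$) and a unique minimum element $\hat 0^P = \emptyset$ (of rank $-1$), so that $\hat 0^P \le p \le \hat 1^P$ for every $p \in P$; and $\mu[p,q] = (-1)^{|q|-|p|}$ in any face poset by Theorem~\ref{Faceuler}, equivalently $\mu_z[p,q] = (-z)^{|q|-|p|}$.

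For the top polynomial, since $d = |P|$ is the maximal rank occurring in $P$, the condition $|q| \ge |P|$ in the definition of $g_{|P|}^P(z)$ forces $|q| = d$, and uniqueness of the rank-$d$ element forces $q = \hat 1^P$; as every $p \in P$ lies below $\hat 1^P$, we get
\[g_{|P|}^P(z) = \sum_{p \in P} \mu_z[p,\hat 1^P] = \sum_{p \in P}(-z)^{\,d-|p|},\]
and collecting the $f_r$ elements of each rank $r$ gives $\sum_{r=-1}^d f_r(-z)^{d-r}$. The bottom polynomial is handled dually: $|p| \le -1$ forces $p = \hat 0^P$, every $q \in P$ lies above $\hat 0^P$, and $\mu_z[\hat 0^P, q] = (-z)^{|q|-(-1)} = (-z)^{|q|+1}$, so grouping by rank yields
\[g_{-1}^P(z) = \sum_{q \in P}(-z)^{\,|q|+1} = \sum_{r=-1}^d f_r(-z)^{r+1}.\]

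There is no real obstacle here — the content is entirely the reduction to a single comparable vertex plus Theorem~\ref{Faceuler} — so the only thing requiring care is bookkeeping: checking that the exponents $d-|p|$ and $|q|-(-1)=|q|+1$ are recorded correctly and that the index of summation over ranks runs from $-1$ to $d$. (If one prefers, the two identities can also be read off by noting that $g_{|P|}^P$ and $g_{-1}^P$ are, after the substitution $z \mapsto -z$, the face-number generating functions $\sum_r f_r z^{d-r}$ and $\sum_r f_r z^{r+1}$ of $P$; but the direct computation above is the cleanest way to present it.)
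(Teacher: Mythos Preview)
Your proof is correct and follows essentially the same approach as the paper: both arguments observe that the constraint $|q|\ge |P|$ (respectively $|p|\le -1$) pins $q=\hat 1^P$ (respectively $p=\hat 0^P$), invoke Theorem~\ref{Faceuler} to evaluate $\mu_z$, and then group by rank to obtain the stated sums. The paper leaves the $g_{-1}^P$ case as ``analogous'' while you spell it out, but otherwise the two write-ups are the same.
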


\begin{proof}
Note that
$$\displaystyle g_{|P|}^P (z) = \sum_{p \le \hat{1} \le q} \mu_z [p, q].$$
Since $p \le {\hat{1}}$ for all $p \in P$ and the only $q \in P$ such that $\hat{1} \le q$ is $q = \hat{1}$, 
$$\displaystyle g_{|P|}^P (z) = \sum_{p \in P} \mu_z [p,\hat{1}]$$
and
$$\displaystyle \mu_z [p, \hat{1}] = (-z)^{|\hat{1}| - |p|} = (-1)^{d - |p|}.$$
Hence,
$$\displaystyle g_{|P|}^P (z) = \sum_{|p|= r} (-z)^{d-r}.$$
Since there are $f_r$ elements in $P$ of rank $r$,
$$\displaystyle g_{|P|}^P (z) = \sum_{r=-1}^d f_r \cdot (-z)^{d-r}.$$
The proof is analogous for $g_{-1}^P$. 
\end{proof}

\section{Pyramids}
Given a $d$-dimensional convex polytope $\mathpzc{P}$, we can obtain the $(d+1)$-dimensional pyramid over $\mathpzc{P}$, denoted $\mathpzc{Py(P)}$, as follows:
Locate $\mathpzc{P}$ on the plane $x=0$ in $\mathbb{R}^{d+1}$. We construct an additional vertex $\alpha$ in the plane $x=1$ and take the union of lines connecting $\mathpzc{P}$ with $\alpha$. 

For each $r$-dimensional face $\mathpzc{p}$ in the polytope $\mathpzc{P}$, there exists a corresponding $(r+1)$ dimensional face $\mathpzc{p}'$ in $\mathpzc{Py(P)}$  connecting the vertex $\alpha$ with $\mathpzc{p}$. All faces in $\mathpzc{Py(P)}$ are of the form $\mathpzc{p}$ or $\mathpzc{p}'$. 

Figure~\ref{PyramidOverSquare} shows the pyramid construction applied to a square.

\begin{figure}[h]
\caption{}
\includegraphics{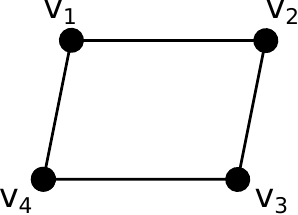} \hspace{.5in}\includegraphics{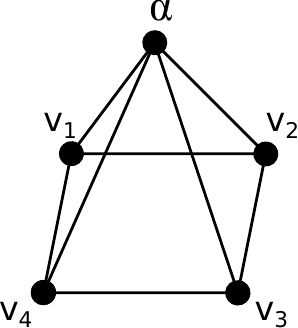}
\label{PyramidOverSquare}
\end{figure}

\begin{definition}
Given a face poset $P$, for each element $p \in P$, define $p'$ so that $p' = p \cup {\alpha}$. 
Let $\lambda_P = \left\{p' | p \in P\right\}$. We call $\lambda_P$ the pyramid addition poset of $\lambda_P$. 
We define the poset $Py(P)$ to be $P \cup \lambda_P$ where the relation is set inclusion. We call $Py(P)$ the pyramid of $P$. 
\end{definition}

Note that if $P$ is the face poset of $\mathpzc{P}$, then, by construction $Py(P)$ is the face poset of $\mathpzc{Py(P)}$. Figure~\ref{PyramidAddition} shows the face poset of a square and its corresponding pyramid addition poset---it is clear that the two posets are isomorphic.  Below, we will prove that this is always the case.

\begin{figure}[h]
\caption{}
\includegraphics{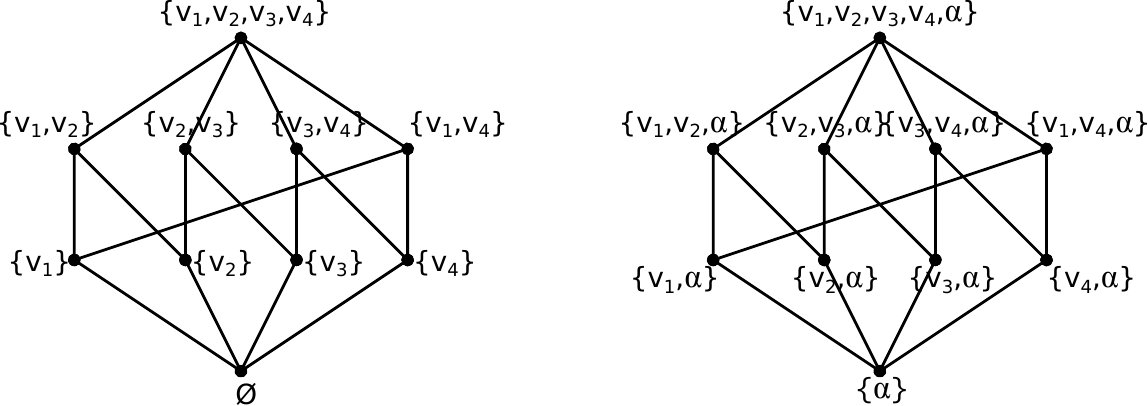}
\label{PyramidAddition}
\end{figure}

\begin{lemma}
\label{bij}
Let $P$ be a face poset with pyramid addition poset $\lambda_P$.Then  
$$\displaystyle \lambda_P \cong P.$$
\end{lemma}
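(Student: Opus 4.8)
The plan is to exhibit an explicit order-isomorphism $\varphi: P \to \lambda_P$ and verify it preserves and reflects the order relation. The natural candidate is $\varphi(p) = p' = p \cup \{\alpha\}$; the definition of $\lambda_P$ already tells us this map is a surjection onto $\lambda_P$, so the work is to check that it is injective and that it is an order-isomorphism in both directions.

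First I would establish that $\varphi$ is a bijection. Surjectivity is immediate from the definition $\lambda_P = \{p' \mid p \in P\}$. For injectivity, suppose $p' = q'$, i.e. $p \cup \{\alpha\} = q \cup \{\alpha\}$. Since $\alpha$ is the new apex vertex and is not a vertex of $\mathpzc{P}$, it belongs to no element of $P$ (each element of $P$ is a subset of the vertex set $V$ of $\mathpzc{P}$, which does not contain $\alpha$). Removing $\alpha$ from both sides therefore recovers $p = q$. Next I would show $p \le_P q \iff p' \le q'$: if $p \subseteq q$ then $p \cup \{\alpha\} \subseteq q \cup \{\alpha\}$, so $p' \le q'$; conversely, if $p \cup \{\alpha\} \subseteq q \cup \{\alpha\}$, intersecting both sides with $V$ (which excludes $\alpha$) gives $p \subseteq q$. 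This gives the order-isomorphism.

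One small point deserving care is the rank/dimension bookkeeping: the isomorphism of posets is purely order-theoretic and does not need to match ranks, but since the statement is about face posets and elsewhere in the paper ranks matter, I would note in passing that $\varphi$ shifts ranks by one, $|p'| = |p| + 1$, consistent with the fact that $\mathpzc{p}'$ is the $(r+1)$-dimensional face obtained by coning $\mathpzc{p}$ over $\alpha$. This is not needed for the bare statement $\lambda_P \cong P$, but it clarifies why $Py(P) = P \cup \lambda_P$ has the expected structure.

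I do not anticipate a genuine obstacle here — the main thing to get right is the observation that $\alpha \notin V$, so that adjoining $\alpha$ is an injective, order-faithful operation; everything else is a one-line set-theoretic verification. The only mild subtlety is making sure the empty face is handled correctly: $\emptyset \in P$ maps to $\{\alpha\} = \emptyset' \in \lambda_P$, which is precisely the vertex $\alpha$ viewed as a face of $\mathpzc{Py(P)}$, and the argument above covers this case without modification.
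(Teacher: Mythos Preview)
Your proposal is correct and follows essentially the same approach as the paper: define $\phi(p)=p'=p\cup\{\alpha\}$ and check that $p\subseteq q$ if and only if $p\cup\{\alpha\}\subseteq q\cup\{\alpha\}$. Your treatment is in fact slightly more careful than the paper's, since you explicitly justify injectivity and the reverse implication via $\alpha\notin V$, whereas the paper leaves these implicit.
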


\begin{proof}
Define a function 
\[\begin{array}{ccccc}
\phi&:&P&\rightarrow&\lambda_P\\
&&p&\mapsto&p'\\
\end{array}\]
We wish to show that $p \le_P q$ if and only if $p' \le_{\lambda_P} q'$. 
We know $p \le_P q$ if and only if $p \subseteq q$. Furthermore, $p \subseteq q$ if and only if $p \cup \left\{\alpha \right\} \subseteq q \cup \left\{\alpha\right\}$. Since $p' = p \cup \left\{\alpha \right\}$and $q' = q \cup \left\{\alpha\right\}$, we know that $q' = q \cup \left\{\alpha\right\}$ if and only if $p' \subseteq q'$. By definition, $p' \subseteq q'$ if and only if $p' \le_{\lambda_P}q'$. \\ Hence, $\phi$ defines an isomorphism of posets and $\lambda_P \cong P$. 
\end{proof}

\begin{definition}
Let $T_2 = \left\{t_1, t_2 \right\}$ be a totally ordered poset on two elements such that $t_1 \le_{T_2} t_2$.
\end{definition}

Figure~\ref{DirectProduct} shows the face poset of a pyramid with a square base.  This poset is the direct product of the face poset of the square with the poset $T_2$.  The diagram is broken down to emphasize this: the grey lines represent the original face poset, the dashed lines show the isomorphic pyramid addition poset, and the solid black lines represent the isomorphic copies of $T_2$ connecting them.   The face poset of a pyramid over a polytope will always have this direct product structure, as we will prove below.

\begin{figure}[h]
\caption{}
\label{DirectProduct}
\includegraphics{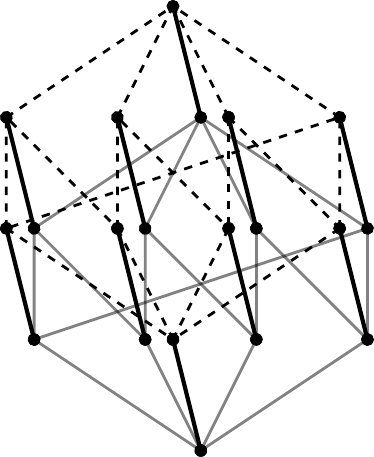}
\end{figure}

\begin{prop}
\label{pydp}
Let $P$ be a face poset with pyramid $Py(P)$. Then, 
$$\displaystyle Py(P) \cong P \times T_2.$$
\end{prop}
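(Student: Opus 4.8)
The plan is to exhibit an explicit bijection $\Psi: P \times T_2 \to Py(P)$ and check it is an order isomorphism in both directions. Recall that elements of $P \times T_2$ are pairs $(p, t_i)$ with $p \in P$ and $i \in \{1,2\}$, and elements of $Py(P)$ are either $p \in P$ (the original faces) or $p' = p \cup \{\alpha\} \in \lambda_P$ (the faces through the apex). The natural map is $(p, t_1) \mapsto p$ and $(p, t_2) \mapsto p'$. This is a bijection because $Py(P) = P \sqcup \lambda_P$ as a set (the union is disjoint since no original face contains $\alpha$), and $p \mapsto p$, $p \mapsto p'$ are bijections $P \to P$ and $P \to \lambda_P$ respectively, the latter by Lemma~\ref{bij}.

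Next I would verify that $\Psi$ preserves and reflects order. The order on $P \times T_2$ says $(p, t_i) \le (q, t_j)$ iff $p \le_P q$ and $t_i \le_{T_2} t_j$; since $T_2$ is a chain, the second condition just says $i \le j$. So there are three cases to handle. First, $(p,t_1)\le(q,t_1)$ iff $p\le_P q$ iff $p\subseteq q$ iff $p\le_{Py(P)} q$, which is immediate. Second, $(p,t_2)\le(q,t_2)$ iff $p\le_P q$, and by Lemma~\ref{bij} this holds iff $p'\le_{\lambda_P} q'$ iff $p'\subseteq q'$ iff $p'\le_{Py(P)} q'$. Third, and this is the only case with any content, $(p,t_1)\le(q,t_2)$ iff $p\le_P q$; I must show this is equivalent to $p \le_{Py(P)} q'$, i.e. $p \subseteq q \cup \{\alpha\}$. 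Since $p$ never contains $\alpha$, $p \subseteq q \cup \{\alpha\}$ is equivalent to $p \subseteq q$, i.e. $p \le_P q$. Finally one notes there is no relation $(p,t_2) \le (q,t_1)$ in the product (since $t_2 \not\le t_1$), matching the fact that in $Py(P)$ a face through the apex can never be contained in a face not through the apex — because $\alpha \notin q$ forces $p' = p\cup\{\alpha\}\not\subseteq q$.

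I would then conclude that $\Psi$ is an isomorphism of posets, hence $Py(P) \cong P \times T_2$. One should also check rank compatibility if one wants the isomorphism to respect the rank functions (so that later Möbius-polynomial computations via Lemma~\ref{dp} go through): $\alpha \notin p$ so $|p'| = |p| + 1$, while in $P\times T_2$ one has $|t_1| = -1$, $|t_2| = 0$, and the rank in a product is the sum plus one, giving $|(p,t_1)| = |p| + (-1) + 1 = |p|$ and $|(p,t_2)| = |p| + 0 + 1 = |p|+1$, which matches. The argument is essentially routine; the only mild subtlety — the ``main obstacle,'' such as it is — is being careful about the mixed case $(p,t_1)$ versus $q'$ and the absence of the reverse relation, both of which hinge on the single observation that original faces do not contain the apex vertex $\alpha$.
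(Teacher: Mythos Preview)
Your proof is correct and follows essentially the same approach as the paper: you define the identical bijection $(p,t_1)\mapsto p$, $(p,t_2)\mapsto p'$ and verify order preservation and reflection by the same four-case analysis, invoking Lemma~\ref{bij} for the $(t_2,t_2)$ case and the observation $\alpha\notin p$ for the mixed cases. Your additional remark on rank compatibility is not in the paper's proof but is a reasonable (and correct) supplement for the downstream use of Lemma~\ref{dp}.
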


\begin{proof}
Define a function:   
\[\begin{array}{ccccc}
\psi&:&P \times T_2&\rightarrow&Py(P)\\
&&(p, t_1)&\mapsto&p\\
&&(p, t_2)&\mapsto&p'.\\
\end{array}\]
We wish to show that $(p, s) \le (q,t)$ if and only if $\psi((p,s)) \le \psi((q,t)).$

For $s = t = t_1$, we need to show that $(p, t_1) \le (q, t_1)$ if and only if $ \psi((p, t_1)) \le \psi ((q, t_1))$.  We know that $(p, t_1) \le_{P \times T_2} (q, t_1)$ if and only if $p \le_P q$. For $p, q \in P$, we have $p \le_P q$ if and only if $p \le_{Py(P)} q$. Since $\psi((p, t_1)) = p$ and $\psi((q, t_1)) = q$, we have$p \le_{Py(P)} q$ if and only if  $\psi((p, t_1)) \le_{Py(P)} \psi ((q, t_1))$. 

For $s = t = t_2$, we need to show that $(p, t_2) \le (q, t_2)$ if and only if $\psi((p, t_2)) \le \psi ((q, t_2))$. We know that $(p, t_2) \le_{P \times T_2} (q, t_2)$ if and only if $p \le_P q$. By Lemma~$\ref{bij}$, we have $p \le_P q$ if and only if $p' \le_{\lambda_P} q'$. We know that $p' \le_{\lambda_P}  q'$ if and only if $p' \le_{Py(P)} q'$. Since  $\psi((p, t_2)) = p'$ and $\psi((q, t_2)) = q'$, we know that $p' \le_{Py(P)} q'$ if and only if  $\psi((p, t_2)) \le_{Py(P)} \psi ((q, t_2))$. 

For $s = t_1 , t = t_2$, we need to show that $(p, t_1) \le (q, t_2)$ if and only if $\psi((p, t_1)) \le \psi((q, t_2))$. We know that $(p, t_1) \le_{P \times T_2} (q, t_2)$ if and only if $p \subseteq q$. Furthermore, we know that $p \subseteq q$ if and only if $p \subseteq q \cup \left\{\alpha\right\}$. Since $q' = q \cup \left\{\alpha\right\}$, we know that $p \subseteq q \cup \left\{\alpha\right\}$  if and only if $p \subseteq q'$. Since $p = \psi((p, t_1))$ and $\psi((q, t_2)) = q'$, we know that $p \subseteq q'$ if and only if $\psi((p, t_1)) \le \psi((q, t_2))$.

For $s = t_2, t = t_1$, we need to show that $(p, t_2) \le (q, t_1)$ if and only if $\psi((p, t_2)) \le \psi ((q, t_1))$. Since $t_2 > t_1$ there do not exist $p, q \in P$ such that $(p, t_2) \le_{P \times T_2} (q, t_1)$. We will now show there do not exist $p, q \in P$ such that $\psi((p, t_2)) \le \psi ((q, t_1))$. We know that $\psi((p, t_2)) \le \psi ((q, t_1))$ if and only if $p' \le q$. This holds if and only if $p \cup \left\{\alpha\right\} \subseteq q$. Since $q \cap \left\{\alpha\right\} = \emptyset$ for all $q \in P$, there do not exist $p, q \in P$ such that $p \cup \left\{\alpha\right\} \subseteq q$.

Hence, $\psi$ defines an isomorphism of posets and we have our result. 
\end{proof}

\begin{theorem}
\label{pyramid}
Given a $d$-dimensional convex polytope $\mathpzc{P}$ with face poset P, 
$$\displaystyle \mathcal{M}_{Py(P)} (z)= (2-z) \cdot \mathcal{M}_P (z).$$
\end{theorem}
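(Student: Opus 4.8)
The plan is to combine Proposition~\ref{pydp} with the direct product formula from Lemma~\ref{dp}. By Proposition~\ref{pydp} we have $Py(P) \cong P \times T_2$, and isomorphic posets obviously have equal M\"obius polynomials, so $\mathcal{M}_{Py(P)}(z) = \mathcal{M}_{P \times T_2}(z)$. Applying Lemma~\ref{dp} gives $\mathcal{M}_{Py(P)}(z) = \mathcal{M}_P(z) \cdot \mathcal{M}_{T_2}(z)$, and it remains only to compute $\mathcal{M}_{T_2}(z)$.

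The second step is the computation of $\mathcal{M}_{T_2}(z)$ directly from the definition. The poset $T_2 = \{t_1, t_2\}$ with $t_1 \le t_2$ has ranks $|t_1| = -1$ and $|t_2| = 0$. The intervals are $[t_1, t_1]$, $[t_2, t_2]$, and $[t_1, t_2]$, contributing $\mu_z[t_1,t_1] = 1$, $\mu_z[t_2,t_2] = 1$, and $\mu_z[t_1,t_2] = \mu[t_1,t_2] z^{1} = -z$ respectively (using $\mu[t_1,t_2] = -\mu[t_1,t_1] = -1$). Hence $\mathcal{M}_{T_2}(z) = 2 - z$.

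Putting these together yields $\mathcal{M}_{Py(P)}(z) = (2-z)\cdot \mathcal{M}_P(z)$, which is the claim. There is essentially no obstacle here: the heavy lifting was already done in establishing the poset isomorphism $Py(P) \cong P \times T_2$ (Proposition~\ref{pydp}) and in the direct product formula (Lemma~\ref{dp}, quoted from \cite{DirectProductsPaper}). The only thing to be careful about is bookkeeping with the rank convention that minimal elements have rank $-1$, so that the exponent $|t_2| - |t_1| = 1$ is correct and the $z$-degree in $\mathcal{M}_{T_2}(z)$ comes out right; everything else is immediate.
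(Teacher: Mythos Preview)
Your proof is correct and follows exactly the paper's approach: apply Proposition~\ref{pydp} to get $Py(P)\cong P\times T_2$, then Lemma~\ref{dp} to factor the M\"obius polynomial, and finally use $\mathcal{M}_{T_2}(z)=2-z$. The only difference is that you spell out the computation of $\mathcal{M}_{T_2}(z)$ explicitly, which the paper simply asserts.
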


\begin{proof}
By Proposition~$\ref{pydp}$ and Lemma~$\ref{dp}$, we know that
$$\displaystyle \mathcal{M}_{Py(P)} (z) = \mathcal{M}_P (z) \cdot \mathcal{M}_{T_2} (z) = (2-z) \cdot \mathcal{M}_P (z).$$
\end{proof}

\begin{cor}
\label{simplex}
If $\mathpzc{S}^d$ is a $d$-dimensional simplex with face poset $S^d$, then
$$\displaystyle \mathcal{M}_{S^d} (z) = (2-z)^{d+1}.$$ 
\end{cor}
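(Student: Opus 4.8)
The plan is to induct on the dimension $d$, using the observation that a $d$-dimensional simplex is the pyramid over a $(d-1)$-dimensional simplex, so that Theorem~\ref{pyramid} can be applied repeatedly. For the base case I would take $d=0$: the $0$-dimensional simplex $\mathpzc{S}^0$ is a single point, so its face poset $S^0$ consists of the empty face and one vertex, i.e.\ a totally ordered set on two elements of ranks $-1$ and $0$. Computing directly, the three intervals $[\emptyset,\emptyset]$, $[\{v\},\{v\}]$, and $[\emptyset,\{v\}]$ contribute $1$, $1$, and $-z$ respectively (the exponent on the last being $|\{v\}|-|\emptyset| = 0-(-1)=1$ under our rank convention), so $\mathcal{M}_{S^0}(z) = 2 - z = (2-z)^{0+1}$. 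One could equally begin at $d=-1$, where the face poset is the single-element poset $\{\emptyset\}$ and $\mathcal{M}(z)=1=(2-z)^0$.

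For the inductive step, suppose the claim holds for $d-1$ and let $\mathpzc{S}^d$ be a $d$-dimensional simplex. The geometric fact I would invoke is that $\mathpzc{S}^d$ is (combinatorially) the pyramid $\mathpzc{Py}(\mathpzc{S}^{d-1})$ over a $(d-1)$-simplex: coning a new apex $\alpha$ over the $d$ affinely independent vertices of $\mathpzc{S}^{d-1}$ produces $d+1$ points in general position, whose convex hull is a $d$-simplex, and every face of $\mathpzc{Py}(\mathpzc{S}^{d-1})$ is either a face of the base or the cone over such a face. Hence the face posets satisfy $S^d \cong Py(S^{d-1})$ in the sense of the pyramid construction of Section~5.

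With this identification, Theorem~\ref{pyramid} gives $\mathcal{M}_{S^d}(z) = (2-z)\cdot\mathcal{M}_{S^{d-1}}(z)$, and the induction hypothesis $\mathcal{M}_{S^{d-1}}(z) = (2-z)^{d}$ yields $\mathcal{M}_{S^d}(z) = (2-z)^{d+1}$, completing the induction. I do not anticipate a genuine obstacle: the only points needing care are the rank bookkeeping in the base case (the empty face has rank $-1$, which is exactly what makes the $z$-exponent in $[\emptyset,\{v\}]$ equal to $1$) and the remark that a pyramid over a simplex is again a simplex, which is immediate from the construction. An alternative phrasing that avoids explicit induction is simply to iterate Theorem~\ref{pyramid} $d$ times starting from $\mathpzc{S}^0$, but the content is identical.
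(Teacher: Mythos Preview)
Your proposal is correct and follows essentially the same argument as the paper: induction on $d$ with base case $d=0$ (where $S^0\cong T_2$ gives $\mathcal{M}_{S^0}(z)=2-z$), and the inductive step using that $\mathpzc{S}^d=\mathpzc{Py}(\mathpzc{S}^{d-1})$ together with Theorem~\ref{pyramid}. The only difference is cosmetic---you compute the base case by listing intervals explicitly, while the paper simply invokes the isomorphism $S^0\cong T_2$.
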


\begin{proof}
We proceed by induction on $d$. For our base case, consider a simplex with dimension $d=0$. Notice that $S^0 \cong T_2$. Hence, $\mathcal{M}_{S^0} (z) = 2-z$. \\ 
Now, we assume that $d > 0$ and that the result holds for simplices of smaller dimension. Notice that $\mathpzc{S}^{d}$ is the $d$ dimensional pyramid over $\mathpzc{S}^{d-1}$. This means that
$$\displaystyle \mathcal{M}_{S^{d+1}} (z) = (2-z) \cdot (2-z)^{d+1} = (2-z)^{d+2}.$$ Thus, the result holds for any simplex $\mathpzc{S}^d$ with face poset $S^d$ for any $d \ge 0$. 
\end{proof}
\section{Prisms}
Given a $d$-dimensional convex polytope $\mathpzc{P}$, we can obtain the $(d+1)$-dimensional pyramid over $\mathpzc{P}$, denoted $\mathpzc{Pr(P)}$, as follows:Let  $\mathpzc{P}_1$ and $\mathpzc{P}_2$ be isomorphic copies of  $\mathpzc{P}$. Locate $\mathpzc{P}_1$ on the plane $x=0$ and $\mathpzc{P}_2$ on the plane $x=1$ in $\mathbb{R}^{d+1}$. We take the union of lines corresponding points on $\mathpzc{P_1}$ and $\mathpzc{P_2}$. Let $V$ be the vertex set of $\mathpzc{P}$, let $V_1$ be the vertex set of $\mathpzc{P}_1$, and let $V_2$ be the vertex set of $\mathpzc{P}_2$. 

For every $r$-dimensional face $\mathpzc{p}$ in the  polytope $\mathpzc{P}$, there exists a corresponding $r$-dimensional face $\mathpzc{p}_1$ in the polytope $\mathpzc{P}_1$, $r$-dimensional face $\mathpzc{p}_2$ in the  polytope $\mathpzc{P}_2$, and $(r+1)$-dimensional face $\mathpzc{p}'$ in $\mathpzc{Pr(P)}$ connecting $\mathpzc{p}_1$ and $\mathpzc{p}_2$. All faces in $\mathpzc{Pr(P)}$ are of the form $\mathpzc{p}'$,  $\mathpzc{p}_1$ or $\mathpzc{p}_2$. Figure~\ref{prism} shows the prism construction applied to a square. 

\begin{figure}[h]
\caption{}
\label{prism}
\includegraphics{Figure1} \hspace{.5in}\includegraphics{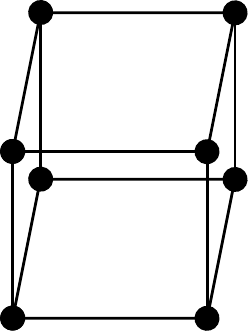}
\end{figure}

\begin{definition}
Given a face poset $P$ of $\mathpzc{P}$, let $P_1$ and $P_2$ be isomorphic copies of $P$, such that $P_1$ is the face poset of $\mathpzc{P}_1$ and $P_2$ is the face poset of $\mathpzc{P}_2$. Just as elements of $P$ are subsets of the vertex set $V$ of $\mathpzc{P}$, elements of $P_1$ are subsets of the vertex set $V_1$ of $\mathpzc{P}_1$ and elements of $P_2$ are subsets of the vertex set $V_2$ of $\mathpzc{P}_2$. For $p \in P$ corresponding to $p_1 \in P_1$ and $p_2 \in P_2$, define $p'$ so that $p' = p_1 \cup p_2$. 
Let $\beta_P =  \left\{p' | p \in P\right\}$. We call $\beta_P $ the prism addition poset of $P$. 
We define the prism of $P$ to be $Pr(P) = P_1 \cup P_2 \cup \beta_P$ with ordering relation given by set inclusion. 
\end{definition}

Note that if $P$ is the face poset of $\mathpzc{P}$, then by construction $Pr(P)$ is the face poset of $\mathpzc{Pr(P)}$.

\begin{lemma}
\label{bijj} 
Let $P$ be a face poset with prism addition poset $\beta_P $. Then
$$\displaystyle (\beta_P)_{\ge 0} \cong P_{\ge 0}.$$ 
\end{lemma}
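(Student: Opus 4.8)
The plan is to mirror the proof of Lemma~\ref{bij}: exhibit the explicit map $p\mapsto p'$, check it is a bijection onto the rank $\ge 0$ part of $\beta_P$, and verify it respects the order in both directions. The only extra bookkeeping relative to the pyramid case is identifying precisely which elements of $\beta_P$ have rank $\ge 0$.

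First I would pin down $(\beta_P)_{\ge 0}$ concretely. For $p\in P$ with $p\ne\emptyset$, the face $\mathpzc{p}'$ of $\mathpzc{Pr(P)}$ connecting $\mathpzc{p}_1$ and $\mathpzc{p}_2$ has dimension $|p|+1$, so $|p'|\ge 1$. For $p=\emptyset$ we have $p'=\emptyset_1\cup\emptyset_2=\emptyset$, which has rank $-1$. Hence $|p'|\ge 0$ exactly when $|p|\ge 0$, and therefore $(\beta_P)_{\ge 0}=\{p'\mid p\in P_{\ge 0}\}$.

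Next I would define $\phi\colon P_{\ge 0}\to(\beta_P)_{\ge 0}$ by $\phi(p)=p'=p_1\cup p_2$ and verify it is a bijection. Surjectivity is immediate from the previous paragraph. For injectivity, recall that the vertex sets $V_1$ and $V_2$ of the two parallel copies $\mathpzc{P}_1,\mathpzc{P}_2$ are disjoint, so from $p'$ one recovers $p_1=p'\cap V_1$ and $p_2=p'\cap V_2$, and then $p$ via the fixed isomorphisms $P\cong P_1$ and $P\cong P_2$; thus $p'=q'$ forces $p_1=q_1$ and hence $p=q$.

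Finally I would check $\phi$ is an order isomorphism. Since $P_1\cong P\cong P_2$ as face posets, $p\le_P q$ is equivalent to $p_1\subseteq q_1$, and equivalently to having both $p_1\subseteq q_1$ and $p_2\subseteq q_2$; using $V_1\cap V_2=\emptyset$ this is in turn equivalent to $p_1\cup p_2\subseteq q_1\cup q_2$, i.e. to $p'\subseteq q'$, i.e. to $p'\le_{\beta_P}q'$. The orders on $P_{\ge 0}$ and on $(\beta_P)_{\ge 0}$ are the restrictions of those on $P$ and $\beta_P$, so this yields $p\le q\iff\phi(p)\le\phi(q)$ and $\phi$ is the claimed isomorphism. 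I do not expect a genuine obstacle; the points to handle carefully are keeping $V_1$ and $V_2$ disjoint so that the union $p_1\cup p_2$ can be inverted, and correctly excising the rank $-1$ element $\emptyset'$ when passing from $\beta_P$ to $(\beta_P)_{\ge 0}$.
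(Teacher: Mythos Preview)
Your proposal is correct and follows essentially the same route as the paper: define $\phi(p)=p'=p_1\cup p_2$ and verify that $p\le q\iff p_1\cup p_2\subseteq q_1\cup q_2$ using disjointness of $V_1$ and $V_2$. You are in fact slightly more careful than the paper, since you explicitly identify $(\beta_P)_{\ge 0}$ and verify bijectivity, whereas the paper's proof takes the well-definedness and bijectivity of $\phi$ as implicit.
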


\begin{proof}
We define a function 
\[\begin{array}{ccccc}
\phi&:&P'&\rightarrow&(\beta_P)_{\ge 0}\\
&&p&\mapsto&p'.\\
\end{array}\]
We wish to show that $\phi$ is an isomorphism of posets, so we need to show that $p \le_{P_{\ge 0}} q$ if and only if $p' \le_{(\beta_P)_{\ge 0}} q'$.  

We know that $p \le_{P_{\ge 0}} q$ if and only if $p \subseteq q$. This is equivalent to $p_1 \subseteq q_1$ and $p_2 \subseteq q_2$, or $p_1 \cup p_2 \subseteq q_1 \cup q_2$. Since $p' = p_1 \cup p_2$ and $q' = q_1 \cup q_2$, we know that $p_1 \cup p_2 \subseteq q_1 \cup q_2$ if and only if $p' \subseteq q'$. Furthermore, $p' \subseteq q'$ if and only if $p' \le_{(\beta_P)_{\ge 0}} q'$.  This gives us our result

\end{proof}

\begin{definition}
Let $U$ be to a poset such that
$$\displaystyle U = \left\{u, v, q \right\}$$ 
with ordering relation defined such that $v, w \le_U u$. Figure~\ref{U} shows the poset $U$. 
\end{definition}

\begin{figure}[h]
\caption{}
\label{U}
\includegraphics{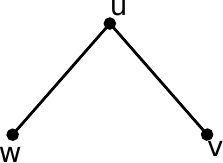}
\end{figure}

\begin{prop}
\label{prdp}
Let $P$ be a face poset with prism $Pr(P)$. Then, 
$$\displaystyle Pr(P)_{\ge 0} \cong P_{\ge 0} \times U.$$
\end{prop}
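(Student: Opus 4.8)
The plan is to construct an explicit bijection $\psi: P_{\ge 0} \times U \to Pr(P)_{\ge 0}$ and then verify that it is an order isomorphism, exactly in the spirit of the proof of Proposition~\ref{pydp}. Recall that $U = \{u, v, w\}$ with $v, w \le_U u$. The natural map sends $(p, v) \mapsto p_1$, $(p, w) \mapsto p_2$, and $(p, u) \mapsto p'$, where $p_1 \in P_1$, $p_2 \in P_2$ are the copies of $p$ and $p' = p_1 \cup p_2 \in \beta_P$. First I would observe that $\psi$ is a well-defined bijection of underlying sets: every element of $Pr(P)_{\ge 0}$ is of the form $p_1$, $p_2$, or $p'$ for a unique $p \in P_{\ge 0}$ (the rank $\ge 0$ restriction removes the shared empty face, so the three ``layers'' are genuinely disjoint), and $U$ has exactly three elements, so the map is a bijection.

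Next I would check the order-preservation condition $(p, s) \le (q, t) \iff \psi((p,s)) \le \psi((q,t))$ by splitting into the nine cases for $(s, t) \in U \times U$. The three ``diagonal'' cases $s = t$ reduce to the statement $p \le_P q \iff (\text{image}) \le (\text{image})$: for $s = t = v$ this is that $P_1 \cong P$ as face posets; for $s = t = w$ it is $P_2 \cong P$; for $s = t = u$ it is Lemma~\ref{bijj}, i.e. $(\beta_P)_{\ge 0} \cong P_{\ge 0}$. The ``mixed'' cases where $s \le_U t$ but $s \ne t$ are $(s,t) = (v, u)$ and $(w, u)$: here $(p, s) \le (q, t) \iff p \le_P q$, and on the image side we need $p_1 \subseteq q'$ (resp. $p_2 \subseteq q'$) $\iff p \subseteq q$. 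Since $q' = q_1 \cup q_2$ and $p_1 \subseteq q_1 \subseteq q'$ whenever $p \subseteq q$, while conversely $p_1 \subseteq q_1 \cup q_2$ forces $p_1 \subseteq q_1$ because $p_1 \subseteq V_1$ and $q_2 \subseteq V_2$ with $V_1 \cap V_2 = \emptyset$, the equivalence holds. Finally, the incomparable cases $(s, t) \in \{(u,v), (u,w), (v,w), (w,v)\}$: on the $P \times U$ side these pairs are never comparable, and on the image side one checks that $p' \not\le q_1$ (since $p' \supseteq$ some vertex of $V_2$, which $q_1 \subseteq V_1$ cannot contain), $p' \not\le q_2$ similarly, and $p_1 \not\le q_2$, $p_2 \not\le q_1$ by disjointness of $V_1$ and $V_2$.

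I expect the main obstacle to be bookkeeping rather than conceptual difficulty: one must be careful that the ordering on $Pr(P)_{\ge 0}$ is \emph{set inclusion of vertex sets}, and that the vertex sets $V_1$ and $V_2$ of the two copies are disjoint, so that containments between elements of different layers are governed entirely by that disjointness. The one genuinely substantive ingredient is Lemma~\ref{bijj}, which handles the $u$--$u$ case; everything else is a direct unwinding of definitions. I would present the argument case-by-case as in Proposition~\ref{pydp}, grouping the incomparable cases together with a single disjointness observation to keep the proof from becoming unwieldy, and conclude that $\psi$ is an isomorphism of posets, giving $Pr(P)_{\ge 0} \cong P_{\ge 0} \times U$.
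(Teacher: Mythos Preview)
Your proposal is correct and follows essentially the same approach as the paper: the paper defines the identical bijection $\psi$ sending $(p,u)\mapsto p'$, $(p,v)\mapsto p_1$, $(p,w)\mapsto p_2$, and verifies order-preservation by the same nine-case analysis, invoking Lemma~\ref{bijj} for the $s=t=u$ case and disjointness of $V_1$ and $V_2$ for the mixed and incomparable cases. Your grouping of the four incomparable cases into a single disjointness observation is a minor presentational improvement, but the argument is otherwise identical.
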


\begin{proof}
Define a bijection $\psi: P_{\geq 0}\times U\rightarrow Pr(P)$ by
\[\psi:\left\{\begin{array}{ccc}
(p, u)&\mapsto&p' \\
(p, v)&\mapsto&p_1\\
(p, w)&\mapsto&p_2
\end{array}\right..\]
We wish to show that $\psi$ is a bijection.  To do so, we need to show that $(p, s) \le (q,t)$ if and only if $\psi((p,s)) \le \psi((q,t)).$ 

For $s = t = u$, we need to show that $(p, u) \le (q, u)$ if and only if $\psi((p,u)) \le \psi((q,u))$. We know that  $(p, u) \le (q, u)$ if and only if $p \le_{P_{\ge 0}} q$. By Lemma~$\ref{bijj}$, we have $p \le_{P_{\ge 0}} q$ if and only if $p' \le_{\beta_P} q'$. For $p', q' \in (\beta_P)_{\ge 0}$, we have $p' \le_{(\beta_P)_{\ge 0}} q'$ if and only if $p' \le_{Pr(P)_{\ge 0}} q'$. Since $\psi((p,u)) = p'$ and $\psi((q,u)) = q'$, we know that $p' \le_{Pr(P)_{\ge 0}} q'$ if and only if $\psi((p,u)) \le_{Pr(P)_{\ge 0}} \psi((q,u))$. 

For $s = t = v$ , we need to show that $(p, v) \le (q, v)$ if and only if $\psi((p,v)) \le \psi((q,v))$. We know that $(p, v) \le_{P_{\ge 0} \times U} (q, v)$ if and only if $p \le_{P_{\ge 0}} q$. Since $P_{\ge 0}$ is isomorphic to $(P_1)_{\ge 0}$, we know $p \le_{P_{\ge 0}} q$ if and only if $p_1 \le_{(P_1)_{\ge 0}} q_1$. For $p_1, q_1 \in (P_1)_{\ge 0}$, we have $p_1 \le_{(P_1)_{\ge 0}} q_1$ if and only if $p_1 \le_{Pr(P)_{\ge 0}} q_1$. Since $\psi((p,r)) = p_1$ and $\psi((q,r)) = q_1$, we know that $p_1 \le_{Pr(P)_{\ge 0}} q_1$ if and only if $\psi((p,v)) \le \psi((q,v)).$ 

A similar argument shows that $(p, s) \le (q,t)$ if and only if $\psi((p,s)) \le \psi((q,t))$ in the case where $s = t = w$. 

For $s = u, t = v$, we want to show $(p, u) \le (q, r)$ if and only if $\psi((p,u)) \le \psi((q,v))$. Since $u > v$, there do not exist $p, q$ in $P_{\ge 0}$ such that $(p, u) \le (q,v)$. We will show there do not exist $p, q$ in $P_{\ge 0}$ such that $\psi((p,u)) \le \psi((q, v))$. We know that $\psi((p,u)) \le \psi((q,v))$ if and only if $p' \subseteq q_1$. Since $p' = p_1 \cup p_2$, there do not exist $p, q \in P_{\ge 0}$ such that $p' \subseteq q_1$. 

A similar argument shows that $(p, s) \le (q,t)$ if and only if $\psi((p,s)) \le \psi((q,t))$ in the case where $s =u, t=w$. 

For $s = v, t = u$, we need to show that $(p, v) \le (q, u)$ if and only if $\psi((p,v)) \le \psi((q,u))$. We know that $(p, v) \le (q, u)$ if and only if $p \subseteq q$. Since $p_1 \cap q_2 = \emptyset$ for all $p, q \in P_{\ge 0}$, we know that $p \subseteq q$ if and only if $p_1 \subseteq q_1 \cup q_2$. Since $q' = q_1\cup q_2$, we have $p_1 \subseteq q_1\cup q_2$ if and only if $p_1 \subseteq q'$. Since $\psi((p,v)) = p_1$ and $ \psi((q,u)) = q'$, we know that $p_1 \subseteq q'$ if and only if $\psi((p,v)) \le \psi((q,u))$. 

A similar argument shows that $(p, s) \le (q,t)$ if and only if $\psi((p,s)) \le \psi((q,t))$ in the case where $s =w, t=u$. 

For $s = v, t = w$, we need to show $(p, v) \le (q, w)$ if and only if $\psi((p,v)) \le \psi((q,w))$. Since $v$ is not below $w$ in the poset U, there does not exist  $p, q$ in $P_{\ge 0}$ such that $(p, v) \le (q,w)$. We want to show there does not exist  $p, q$ in $P_{\ge 0}$ such that $\psi((p,v)) \le \psi((q,w))$. We know that $\psi((p,v)) \le \psi((q,w))$ if and only if $p_1 \subseteq q_2$. However, for all $\left\{p, q \in P_{\ge 0}\mid p_1 \subseteq q_2 \right\}= \emptyset$. Hence, there do not exist $p, q \in P_{\ge 0}$ such that $p_1 \subseteq q_2$. 

A similar argument shows that $(p, s) \le (q,t)$ if and only if $\psi((p,s)) \le \psi((q,t))$ in the case where $s =w, t=v$. 

Hence, $\psi$ defines an isomorphism of posets and so $Pr(P)_{\ge 0} \cong P_{\ge 0} \times U$.

\end{proof}

\begin{theorem}
\label{prismform}
$$\displaystyle \mathcal{M}_{Pr(P)}  (z) = (3-2z) \cdot \left(\mathcal{M}_P (z) - g_{-1}^P (z)\right) + g_{-1}^{Pr(P)}.$$ 
\end{theorem}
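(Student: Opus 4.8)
The plan is to reduce everything to the direct-product decomposition of Proposition~\ref{prdp} together with the multiplicativity of M\"obius polynomials under direct products (Lemma~\ref{dp}), via a bookkeeping identity that strips off the rank-$(-1)$ element.

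First I would record a general fact about any ranked poset $P$: the closed intervals of $P$ partition into those lying entirely in $P_{\ge 0}$ and those of the form $[p,q]$ with $|p|=-1$. Indeed, if an interval $[p,q]$ contains an element $s$ with $|s|=-1$, then rank monotonicity in a ranked poset forces $|p|=-1$, and since no two rank-$(-1)$ elements are comparable we get $p=s$; conversely any $[p,q]$ with $p\in P_{-1}$ meets $P_{-1}$, and distinct rank-$(-1)$ elements yield disjoint families of such intervals. The intervals in $P_{\ge 0}$ contribute exactly $\mathcal{M}_{P_{\ge 0}}(z)$, while the remaining ones are counted, once each, by $g_{-1}^P(z)=\sum_{p\in P_{-1}}\sum_{p\le q}\mu_z[p,q]$. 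This gives
\[\mathcal{M}_P(z)=\mathcal{M}_{P_{\ge 0}}(z)+g_{-1}^P(z),\]
and, applied to $Pr(P)$ (whose unique rank-$(-1)$ element is $\emptyset$), also $\mathcal{M}_{Pr(P)}(z)=\mathcal{M}_{Pr(P)_{\ge 0}}(z)+g_{-1}^{Pr(P)}(z)$.

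Next I would compute $\mathcal{M}_U(z)$ straight from the definition of $U=\{u,v,w\}$ with $v,w\le u$: the three singleton intervals each contribute $1$ and the two intervals $[v,u]$, $[w,u]$ each contribute $-z$, so $\mathcal{M}_U(z)=3-2z$. Then Proposition~\ref{prdp} and Lemma~\ref{dp} give
\[\mathcal{M}_{Pr(P)_{\ge 0}}(z)=\mathcal{M}_{P_{\ge 0}}(z)\cdot\mathcal{M}_U(z)=(3-2z)\,\mathcal{M}_{P_{\ge 0}}(z).\]
Substituting $\mathcal{M}_{P_{\ge 0}}(z)=\mathcal{M}_P(z)-g_{-1}^P(z)$ from the first paragraph into this and then into the decomposition of $\mathcal{M}_{Pr(P)}(z)$ yields
\[\mathcal{M}_{Pr(P)}(z)=(3-2z)\bigl(\mathcal{M}_P(z)-g_{-1}^P(z)\bigr)+g_{-1}^{Pr(P)}(z),\]
which is the assertion.

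The one place that needs genuine care is the interval-partition identity of the first paragraph: one must verify both that any interval meeting $P_{-1}$ has its bottom element in $P_{-1}$ (rank monotonicity) and that the rank-$(-1)$ elements index disjoint families of intervals, so that no interval is double-counted in $g_{-1}^P(z)$; in the prism case this is immediate since $\emptyset$ is the only such element. Everything after that is a direct invocation of Proposition~\ref{prdp} and Lemma~\ref{dp} plus the one-line evaluation of $\mathcal{M}_U(z)$.
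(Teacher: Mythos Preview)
Your proof is correct and follows essentially the same route as the paper: both arguments use Proposition~\ref{prdp} and Lemma~\ref{dp} to get $\mathcal{M}_{Pr(P)_{\ge 0}}(z)=(3-2z)\,\mathcal{M}_{P_{\ge 0}}(z)$, then combine this with the identities $\mathcal{M}_P(z)=\mathcal{M}_{P_{\ge 0}}(z)+g_{-1}^P(z)$ and $\mathcal{M}_{Pr(P)}(z)=\mathcal{M}_{Pr(P)_{\ge 0}}(z)+g_{-1}^{Pr(P)}(z)$. The only difference is that you spell out the interval-partition justification for those last two identities and the evaluation $\mathcal{M}_U(z)=3-2z$, which the paper simply asserts.
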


\begin{proof}
By Proposition~$\ref{prdp}$ and Lemma~$\ref{dp}$, we know 
$$\displaystyle \mathcal{M}_{Pr(P)_{\ge 0}} (z) = \mathcal{M}_{P_{\ge 0}} (z) \cdot \mathcal{M}_U (z) = (3-2z) \cdot \mathcal{M}_{P_{\ge 0}} (z).$$
Note that $\mathcal{M}_{P_{\ge 0}} (z) = \mathcal{M}_P (z) - g_{-1}^P (z)$ and $\mathcal{M}_{Pr(P)} (z)  = \mathcal{M}_{Pr(P)_{\ge 0}} (z) + g_{-1}^{Pr(P)} (z)$. 
Hence, $$\displaystyle \mathcal{M}_{Pr(P)} (z) = (3-2z) \cdot \left(\mathcal{M}_P (z) - g_{-1}^P (z)\right) + g_{-1}^{Pr(P)} (z).$$
\end{proof}

We now use this theorem to give a proof of the following result, originally proved in 2012 by Colleen Duffy, Wai Shan Chan, and Cary Schneider at the University of Wisconsin, Eau Claire:

\begin{cor}
\label{hypercube}
If $\mathpzc{C}^d$ is a $d$-dimensional hypercube with face poset $C^d$, then
$$\displaystyle \mathcal{M}_{C^d} (z) = (-2z+3)^d - z(2-z)^d + 1.$$
\end{cor}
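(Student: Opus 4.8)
The plan is to induct on $d$, using the observation that the $d$-dimensional hypercube $\mathpzc{C}^d$ is precisely the prism over the $(d-1)$-dimensional hypercube $\mathpzc{C}^{d-1}$, so that $C^d = Pr(C^{d-1})$, and then to apply Theorem~\ref{prismform}. That theorem expresses $\mathcal{M}_{Pr(P)}(z)$ in terms of $\mathcal{M}_P(z)$, the bottom polynomial $g_{-1}^P(z)$, and the bottom polynomial $g_{-1}^{Pr(P)}(z)$, so before running the induction I would first pin down the bottom polynomial of the hypercube face poset.

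For the bottom polynomial, I would use Lemma~\ref{facepoly}, which gives $g_{-1}^{C^d}(z) = \sum_{r=-1}^d f_r\,(-z)^{r+1}$, together with the $f$-vector of the $d$-cube, namely $f_{-1}=1$ and $f_r = \binom{d}{r}2^{d-r}$ for $0 \le r \le d$. Substituting and factoring $-z$ out of the sum over $r \ge 0$ collapses it by the binomial theorem to
$$g_{-1}^{C^d}(z) = 1 - z(2-z)^d.$$
Alternatively, one can avoid quoting the $f$-vector and derive this by a short auxiliary induction: the prism construction gives $f_0^{Pr(P)} = 2f_0^P$ and $f_r^{Pr(P)} = 2f_r^P + f_{r-1}^P$ for $r \ge 1$ — the empty face of $P$ contributes nothing new in dimension $0$ since $\emptyset_1 \cup \emptyset_2 = \emptyset$ — and this yields the recursion $g_{-1}^{C^d}(z) - 1 = (2-z)\bigl(g_{-1}^{C^{d-1}}(z) - 1\bigr)$ with base value $g_{-1}^{C^0}(z) - 1 = -z$. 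I would be careful here, since this identification of the three ``empty'' faces in the prism is exactly the place where a naive $f$-vector recursion goes wrong; that is the step most likely to trip up the bookkeeping, and it is really the only substantive part of the argument.

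With the bottom polynomial in hand, the induction is immediate. The base case $d=0$ is $C^0 \cong T_2$, so $\mathcal{M}_{C^0}(z) = 2-z$, which matches $(-2z+3)^0 - z(2-z)^0 + 1 = 2-z$. For the inductive step, assume $\mathcal{M}_{C^{d-1}}(z) = (3-2z)^{d-1} - z(2-z)^{d-1} + 1$. Since $g_{-1}^{C^{d-1}}(z) = 1 - z(2-z)^{d-1}$, the difference $\mathcal{M}_{C^{d-1}}(z) - g_{-1}^{C^{d-1}}(z)$ simplifies to exactly $(3-2z)^{d-1}$, so Theorem~\ref{prismform} gives
$$\mathcal{M}_{C^d}(z) = (3-2z)(3-2z)^{d-1} + g_{-1}^{C^d}(z) = (3-2z)^d + 1 - z(2-z)^d,$$
which is the claimed formula. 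Once the bottom-polynomial computation is correct, the rest is a one-line substitution.
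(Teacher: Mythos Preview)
Your proposal is correct and follows essentially the same route as the paper: it isolates the bottom polynomial $g_{-1}^{C^d}(z)=1-z(2-z)^d$ via Lemma~\ref{facepoly} and the hypercube $f$-vector (the paper records this as a separate lemma), then runs the same induction on $d$ using $C^d=Pr(C^{d-1})$ and Theorem~\ref{prismform}, with the identical base case $C^0\cong T_2$ and the same one-line cancellation $\mathcal{M}_{C^{d-1}}-g_{-1}^{C^{d-1}}=(3-2z)^{d-1}$ in the inductive step. Your alternative derivation of the bottom polynomial via a prism $f$-vector recursion is a minor variant that the paper does not pursue, but the main argument is the same.
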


We will use the following lemma in our proof:
\begin{lemma}
\label{botpoly}
If $\mathpzc{C}^d$ is a $d$-dimensional hypercube with face poset $C^d$, then
$$\displaystyle g_{-1}^{C^d} (z) = 1 - z(2-z)^d.$$
\end{lemma}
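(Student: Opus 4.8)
The plan is to compute $g_{-1}^{C^d}(z)$ directly from the formula in Lemma~\ref{facepoly}, which states that $g_{-1}^{P}(z) = \sum_{r=-1}^{d} f_r \cdot (-z)^{r+1}$, so the task reduces to knowing the $f$-vector of the $d$-dimensional hypercube. The number of $r$-dimensional faces of $\mathpzc{C}^d$ is the standard count $f_r = \binom{d}{r}2^{d-r}$ for $0 \le r \le d$, together with $f_{-1} = 1$. Substituting, I would write
\[
g_{-1}^{C^d}(z) = \sum_{r=-1}^{d} f_r (-z)^{r+1} = 1 + \sum_{r=0}^{d} \binom{d}{r} 2^{d-r} (-z)^{r+1} = 1 - z\sum_{r=0}^{d}\binom{d}{r}2^{d-r}(-z)^{r},
\]
and then recognize the remaining sum as a binomial expansion: $\sum_{r=0}^{d}\binom{d}{r}2^{d-r}(-z)^{r} = (2-z)^d$. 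This gives $g_{-1}^{C^d}(z) = 1 - z(2-z)^d$, as claimed.

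Alternatively, since the excerpt emphasizes recursive constructions, I could give an inductive proof using the fact that $\mathpzc{C}^{d}$ is the prism over $\mathpzc{C}^{d-1}$. For the base case $d=0$, the hypercube is a point, whose face poset is $T_2$, and $g_{-1}^{T_2}(z) = \mu_z[\hat 0,\hat 0] + \mu_z[\hat 0,\hat 1] = 1 - z = 1 - z(2-z)^0$. For the inductive step I would need a relationship between $g_{-1}^{Pr(P)}(z)$ and the data of $P$; tracking which intervals of $Pr(P)$ contain the unique minimal element, one finds that such intervals are exactly $[\emptyset, p_1]$, $[\emptyset, p_2]$, and $[\emptyset, p']$ for $p \in P$ (plus $[\emptyset,\emptyset]$), and each such interval is itself a face poset, so its Möbius value is $(-1)^{\mathrm{rank}}$. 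This lets one express $g_{-1}^{Pr(P)}(z)$ in terms of the $f$-vector of $P$ and reduce to the non-inductive computation above anyway.

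The main obstacle is essentially bookkeeping rather than conceptual: one must be careful about the $r=-1$ term (the empty face) when splitting off the factor of $-z$, and must correctly identify the $f$-vector of the hypercube. Since Lemma~\ref{facepoly} is already available and does all the structural work of relating $g_{-1}^P$ to the $f$-vector, I expect the first approach to be cleanest, and the proof to be just a few lines. I would present the direct computation, citing Lemma~\ref{facepoly} for the formula and the binomial theorem for collapsing the sum.
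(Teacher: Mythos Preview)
Your first approach is correct and is essentially the same as the paper's proof: both invoke Lemma~\ref{facepoly}, plug in the hypercube face count $f_r=\binom{d}{r}2^{d-r}$ (the paper supplies a short combinatorial justification for this count, which you simply cite as standard), and collapse the resulting sum via the binomial theorem. Your alternative inductive sketch is not needed and is not what the paper does.
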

\begin{proof}
By Lemma~$\ref{facepoly}$, we know that
$$\displaystyle g_{-1}^{C^d} (z) = \sum_{r=-1}^d f_r \cdot (-z)^{r+1}.$$
Notice that $f_{-1} = 1.$ We need to calculate $f_r$ for $r \ge 0$ in terms of $d$. Notice that the convex hull of the set of points $(\pm 1, \pm 1,..., \pm 1)$ in $\mathbb{R}^d$ corresponds to a $d$-dimensional hypercube. We can count the number of $r$-dimensional faces $\mathpzc{f}_r$ as follows: \\
There are  ${d \choose r}$ ways to choose the $r$ dimensions of the hyperplane. Consider the set of vertices of the hypercube contained in the hyperplane. There are $2^{d-r}$ ways to choose the coordinates of the remaining $d-r$ dimensions that must remain fixed over all points in the hyperplane. Notice that there is only way to fill in the remaining $r$ coordinates which must range over all possible combinations of $\pm 1$. Thus, $\mathpzc{f}_r = 2^{d-n} {d \choose r}.$ 
Hence, we know that 
\begin{align*}
g_{-1}^{C^d} (z) &= 1 + \sum_{r=0}^{d} 2^{d-r} {d \choose r} (-z)^{r+1} \\
&= 1 - z \cdot \sum_{r=0}^d 2^{d-r} {d \choose r} (-z)^r \\
&= 1 - z(2-z)^d.  
\end{align*}
\end{proof}
\begin{proof}[Proof of Corollary~$\ref{hypercube}$]
We proceed by induction on $d$. For $d=0$, notice that $C^0 \cong T_2$. Hence, $\mathcal{M}_{C^0} = 2-z.$ \\
Now assume that $d > 0$ and that the result holds for hypercubes of smaller dimension. Notice that a hypercube of dimension $d$ is the prism over a hypercube of dimension $d-1$. Hence, we know that
$$\displaystyle \mathcal{M}_{C^d} (z) = (3-2z) \cdot \left(\mathcal{M}_{C^{d-1}}(z) - g_{-1}^{C^{d-1}} (z)\right) + g_{-1}^{C^d}.$$
By Lemma~$\ref{botpoly}$ and the induction hypothesis, we know that 
\begin{align*}
\mathcal{M}_{C^d} (z) &= (3-2z) \cdot \left((-2z+3)^{d-1} - z(2-z)^{d-1} + 1 - (1 - z(2-z)^{d-1})\right) \\
&+ 1 - z(2-z)^d \\
&= (-2z+3)^{d} - z(2-z)^d +1. 
\end{align*}
Thus, the result holds for any hypercube $\mathpzc{C}^d$ with face poset $C^d$ for any $d \ge 0$. 
\end{proof}

\section{Gluing Together Convex Polytopes}

Given $d$-dimensional convex polytopes $\mathpzc{P}$,$\mathpzc{Q}$, and $(d-1)$-dimensional convex polytope $\mathpzc{R}$, such that $\mathpzc{P}$ and $\mathpzc{Q}$ contain isomorphic copies $\mathpzc{R}_P$ and $\mathpzc{R}_Q$ of $\mathpzc{R}$, respectively, we can obtain the $d$-dimensional convex polytope $\mathpzc{GL(P,Q, R)}$ as follows:

Glue $\mathpzc{P}$ and $\mathpzc{Q}$ along their isomorphic copies of $\mathpzc{R}$, so that $\mathpzc{R}_P$ and $\mathpzc{R}_Q$ line up. Then delete the faces $\mathpzc{R}_P$ and $\mathpzc{R}_Q$ which are now in the interior of the convex polytope.

This operation does not allow for a face $\mathpzc{f} \in \mathpzc{P}$ and a face $\mathpzc{g} \in \mathpzc{Q}$ to be at angles such that, after the gluing, $\mathpzc{f}$ and $\mathpzc{g}$ become one face. For example, the following construction is not permitted:

Let $\mathpzc{P}$ and $\mathpzc{Q}$ be cubes of side length $1$. Let $\mathpzc{R}$ be a square of side length $1$. Then the gluing process yields a rectangular prism. However, a square face in $\mathpzc{P}$ and a square face in $\mathpzc{Q}$ formed a single rectangular face in the gluing. This is not permitted.

We use the following definitions in this section:

\begin{definition}
For a  poset $P$, we define the poset $\hat{P}$ to be $P \cup \left\{\hat{1}^{\hat{P}}\right\}$ with ordering relation given by
\begin{itemize}
\item[(i)] For $p, q \in P$, $p \le_{\hat{P}} q$ if $p \le_P q$. 
\item[(ii)] For $p \in P$, $p \le_{\hat{P}} \hat{1}^{\hat{P}}$. 
\end{itemize}
\end{definition}

\begin{definition}
For a poset $P$ with a unique maximal element $\hat{1}^P$, we define the poset $\dot{P}$ to be $\hat{P} \setminus \left\{\hat{1}^P \right\}$ with ordering relation given by
\begin{itemize}
\item[(i)] For $p, q \in P \setminus \left\{\hat{1}^P \right\}$, $p \le_{\dot{P}} q$ if $p \le_{P} q$. 
\item[(ii)] For $p \in P$, $p \le_{\dot{P}} \hat{1}^{\hat{P}}$. 
\end{itemize}
\end{definition} 

\begin{definition}
Given face posets $P$, $Q$ of rank $d$, and $R$ of rank $d-1$, such that $P$ and $Q$ contain isomorphic copies $R_P$ and $R_Q$ of $R$, with isomorphism maps $\phi^P: R \rightarrow R^P$ and $\phi^Q: R \rightarrow R^Q$ define the gluing poset of $P$, $Q$ along $R$ to be 
$$\displaystyle Gl(P,Q,R) = (P \setminus \hat{R}_P) \cup (Q \setminus \hat{R}_Q) \cup \dot{R}.$$
where $\hat{R}_P = R_P \cup \left\{\hat{1}^P\right\}$ and where $\hat{R}_Q = R_Q \cup \left\{\hat{1}^Q\right\}$. 

For ease of notation, we will refer to $Gl(P, Q, R)$ as $Gl$ when there is no risk of ambiguity. 

We define the ordering relation $\le_{Gl}$ as follows: 
\begin{itemize}
\item[(i)]For $p, q \in P \setminus \hat{R}_P, p \le_{Gl} q$ if $p \le_P q$.  
\item[(ii)]If $p, q \in Q \setminus \hat{R}_Q, p \le_{Gl} q$ if $p \le_Q q$.  
\item[(iii)]If $p, q \in \dot{R}$ and $p \le_{Gl} q$ if $p \le_{\dot{R}} q$.
\item[(iv)]If $p \in P \setminus \hat{R}_P$ and $q \in \dot{R}, p \le_{Gl} q$ if $p \le_P \phi^P(q)$. 
\item[(v)]If $p \in \dot{R}$ and $q \in P \setminus \hat{R}_P, , p \le_{Gl} q$  if $\phi^P(p) \le_P q$. 
\item[(vi)]If $p \in Q \setminus \hat{R}_Q$  and $q \in \dot{R}, p \le_{Gl} q$  if $p \le_Q \phi^Q(q)$. 
\item[(vii)]If $p \in \dot{R}$  and $q \in Q \setminus \hat{R}_Q, p \le_{Gl} q$  if $\phi^Q(p) \le_Q q$. 
\end{itemize}

\end{definition}

Notice that $\hat{1}^{\dot{R}} = \hat{1}^{\hat{R}} = \hat{1}^{Gl}$, and that by construction, $Gl(P,Q,R)$ is the face poset of the convex polytope $\mathpzc{GL(P,Q,R)}$.

\begin{theorem}
\label{glue}
$$\displaystyle \mathcal{M}_{Gl} (z) = \mathcal{M}_P (z) + \mathcal{M}_Q (z) -  \mathcal{M}_R (z) - 1 +z - (1-z) \cdot g_{|R|}^R (z).$$
\end{theorem}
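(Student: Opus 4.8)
The plan is to compute $\mathcal{M}_{Gl}(z)$ by sorting every interval $[p,q]$ of $Gl = Gl(P,Q,R)$ according to where its endpoints live, then matching each piece against the corresponding sum of intervals in $P$, $Q$, and $R$. Recall from the construction that as a set, $Gl = (P\setminus\hat R_P)\sqcup(Q\setminus\hat R_Q)\sqcup\dot R$, and that $\hat 1^{Gl} = \hat 1^{\hat R} = \hat 1^{\dot R}$ is the unique maximum of $Gl$; moreover $\dot R$ (with its top element removed) is an isomorphic copy of $R\setminus\hat R$ sitting below both $P$-part and $Q$-part in the expected way, with the global maximum $\hat 1^{Gl}$ glued on top. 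The key structural observation to nail down first is that, because of ordering relations (iv)--(vii), an interval $[p,q]$ in $Gl$ with $q\neq\hat 1^{Gl}$ lies \emph{entirely} inside one of the "sides": if $q\in P\setminus\hat R_P$ then every $s\in[p,q]$ is in $(P\setminus\hat R_P)\cup\dot R$ and the interval is order-isomorphic to the corresponding interval in $P$ (with the caveat that $R_P$-elements are relabelled as $\dot R$-elements); symmetrically for $Q$; and if $q\in\dot R\setminus\{\hat 1^{Gl}\}$ the interval sits inside $\dot R\setminus\{\hat 1^{Gl}\}\cong R\setminus\hat R$. Since the Möbius function of an interval depends only on its isomorphism type, each such interval contributes exactly the same $\mu_z$ as its counterpart in $P$, $Q$, or $R$.

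Next I would set up the bookkeeping. Write $\mathcal{M}_P(z) = A_P + B_P$, where $A_P = \sum_{[p,q],\,q\neq\hat 1^P}\mu_z^P[p,q]$ and $B_P = \sum_{p\le\hat 1^P}\mu_z^P[p,\hat 1^P] = g_{|P|}^P(z)$ is the top polynomial; do the same for $Q$ and for $R$. For the intervals of $Gl$ with $q\neq\hat 1^{Gl}$: those with $q$ on the $P$-side, together with those whose entire interval lies in $P\setminus\hat R_P$, reproduce $A_P$ minus the intervals of $P$ whose top is in $R_P\setminus\{\hat1^P\}$ — wait, more carefully: the intervals of $Gl$ avoiding $\hat 1^{Gl}$ split as (a) those with top in $P\setminus\hat R_P$, giving $\sum_{q\in P\setminus\hat R_P,\,q\neq\hat1^P}\sum_{p\le q}\mu_z^P[p,q]$; (b) those with top in $Q\setminus\hat R_Q$, giving the analogous $Q$-sum; and (c) those with top in $\dot R\setminus\{\hat1^{Gl}\}$, giving $\sum_{q\in R\setminus\hat R,\,q\neq\hat1^R}\sum_{p\le q}\mu_z^R[p,q]$. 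Since in $P$ the only elements \emph{not} of the form "in $P\setminus\hat R_P$ or $=\hat1^P$" are the elements of $R_P\setminus\{\hat1^P\}$, sum (a) equals $A_P$ minus $\sum_{q\in R_P\setminus\{\hat1^P\}}\sum_{p\le q}\mu_z^P[p,q]$; but every such interval $[p,q]$ with $q\in R_P$ lies inside $R_P$ (since $R_P$ is a face subposet, $\le_P$ restricted to $R_P$ is $\le_R$), so that subtracted sum is exactly $A_R$. Hence (a) $= A_P - A_R$, (b) $= A_Q - A_R$, and (c) $= A_R$. Adding, the "$q\neq\hat1^{Gl}$" contribution is $A_P + A_Q - A_R = (\mathcal{M}_P - B_P) + (\mathcal{M}_Q - B_Q) - (\mathcal{M}_R - B_R)$.

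It remains to compute the contribution of intervals $[p,\hat1^{Gl}]$, i.e. $g_{|Gl|}^{Gl}(z)$, the top polynomial of $Gl$. By Lemma~\ref{facepoly} this equals $\sum_{r=-1}^{d} f_r^{Gl}\cdot(-z)^{d-r}$ where $d = |Gl| = |P| = |Q|$, so I need the $f$-vector of $Gl$: for $0\le r\le d-1$ we have $f_r^{Gl} = f_r^P + f_r^Q - f_r^R$ (the copies of $R$ are identified, and the two deleted facets $R_P,R_Q$ are in dimension $d-1$ — note $|R|=d-1$ so $R$'s top faces are the $(d-1)$-faces being removed, handled by the $-f_r^R$ term for $r\le d-1$), while $f_{-1}^{Gl}=1$ and $f_d^{Gl}=1$. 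Plugging into Lemma~\ref{facepoly} and comparing with $g_{|P|}^P(z) = B_P$, $g_{|Q|}^Q(z)=B_Q$, $g_{|R|}^R(z) = B_R$ (which has top degree $|R|=d-1$, hence a shift by one power of $z$ relative to the others), a direct term-by-term computation should give
\[
g_{|Gl|}^{Gl}(z) = B_P + B_Q - z\cdot g_{|R|}^R(z) - (1-z),
\]
the $-(1-z)$ coming from reconciling the constant term ($r=d$) and the mismatch at $r=-1$; one checks the $(-z)^{d-r}$ for the identified $R$-part of $Gl$ against the $(-z)^{(d-1)-r}$ in $g_{|R|}^R$, which accounts for the factor $z$ in front of $g_{|R|}^R(z)$. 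Combining the two contributions:
\[
\mathcal{M}_{Gl}(z) = (\mathcal{M}_P - B_P) + (\mathcal{M}_Q - B_Q) - (\mathcal{M}_R - B_R) + B_P + B_Q - z\, g_{|R|}^R(z) - (1-z),
\]
and since $B_R = g_{|R|}^R(z)$ the $B_R$ and $-z\,g_{|R|}^R$ terms combine to $(1-z)g_{|R|}^R(z)$ with a sign, yielding
\[
\mathcal{M}_{Gl}(z) = \mathcal{M}_P(z) + \mathcal{M}_Q(z) - \mathcal{M}_R(z) - 1 + z - (1-z)\,g_{|R|}^R(z),
\]
as claimed.

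I expect the main obstacle to be the $f$-vector / top-polynomial bookkeeping in the last paragraph: getting the inclusion--exclusion for $f_r^{Gl}$ exactly right (in particular correctly accounting for the \emph{deleted} facets $R_P$ and $R_Q$ versus the \emph{retained-but-identified} lower faces of $R$, and the degree shift because $|R| = d-1$), and then verifying that the leftover constants assemble precisely into $-1+z$. The isomorphism-of-intervals argument in the first two paragraphs is conceptually the crux but is essentially a careful unwinding of the seven-case ordering definition of $\le_{Gl}$ together with the already-proved fact (Theorem, Section 4) that intervals of face posets are face posets and that $R_P\subseteq P$ inherits the face-poset order of $R$; I would state it as a lemma and prove it by checking that the relabelling $R_P\leftrightarrow\dot R\setminus\{\hat1\}$ extends to an order isomorphism on each relevant down-set.
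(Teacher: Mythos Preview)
Your overall strategy---sorting intervals of $Gl$ by where the top element $q$ lives, then handling the case $q=\hat 1^{Gl}$ separately via the $f$-vector and Lemma~\ref{facepoly}---is sound and does lead to the result, but there are two compensating bookkeeping errors which you then paper over with an unjustified sign flip at the very end.

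First, in your computation of (a): the elements of $P$ that are neither in $P\setminus\hat R_P$ nor equal to $\hat 1^P$ are precisely the elements of $R_P$ (note $\hat 1^P\notin R_P$, since $|R|=d-1$). So the subtracted sum ranges over \emph{all} $q\in R_P$, including $q=\hat 1^{R_P}$, and therefore equals $\mathcal M_R$, not $A_R$. Hence $(a)=A_P-\mathcal M_R$, $(b)=A_Q-\mathcal M_R$, and $(a)+(b)+(c)=A_P+A_Q+A_R-2\mathcal M_R=\mathcal M_P+\mathcal M_Q-\mathcal M_R-B_P-B_Q-B_R$. Second, in the top polynomial: since $(-z)\cdot g_{|R|}^R(z)=\sum_{r=-1}^{d-1}f_r^R(-z)^{d-r}$, subtracting the $R$-part of the $f$-vector contributes $+z\,B_R$, giving $g_d^{Gl}(z)=B_P+B_Q+z\,B_R-(1-z)$ rather than $-z\,B_R$. (Concretely, $f_{d-1}^{Gl}=f_{d-1}^P+f_{d-1}^Q-2$ and $f_d^{Gl}=f_d^P+f_d^Q-1$; working through the shift from $|R|=d-1$ to $|Gl|=d$ produces the extra factor $(-z)$, whose sign you dropped.) With both corrections the cross terms combine as $-B_R+zB_R=-(1-z)B_R$, and the formula follows honestly without the ad hoc ``with a sign'' at the last step.

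For comparison, the paper takes a different route that avoids the $f$-vector computation altogether. It introduces the auxiliary posets $\hat R$ (adjoin a new maximum above $R$) and $\dot R$ (then delete the old $\hat 1^R$), proves as separate lemmas that $\mathcal M_{\hat R}=\mathcal M_R+1-z$ and $\mathcal M_{\dot R}=\mathcal M_{\hat R}-(1-z)g_{|R|}^R$, and then invokes the one-line inclusion--exclusion $\mathcal M_{Gl}=\mathcal M_P+\mathcal M_Q-2\mathcal M_{\hat R}+\mathcal M_{\dot R}$. Your approach is more hands-on and never needs $\dot R$ as an abstract object; the paper's approach isolates the ``remove $\hat 1^R$, add a new top'' effect into reusable lemmas and keeps the final arithmetic shorter.
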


We will prove this using the following lemmas: 
\begin{lemma}
\label{hat}
$$\displaystyle \mathcal{M}_{\hat{R}} (z) = \mathcal{M}_R (z) + 1 - z.$$ 
\end{lemma}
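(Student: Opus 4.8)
The plan is to directly compare the M\"obius polynomial of $\hat{R}$ with that of $R$ by analyzing which intervals are gained when we adjoin the new top element $\hat{1}^{\hat{P}}$. Write $\hat{R} = R \cup \{\hat{1}\}$ where $\hat{1} = \hat{1}^{\hat{P}}$ is above every element of $R$. Every closed interval $[p,q]$ of $\hat{R}$ either lies entirely in $R$ (in which case $\mu^{\hat{R}}[p,q] = \mu^R[p,q]$ and the rank contribution is unchanged, since the rank function on $R$ is inherited) or has $q = \hat{1}$. Hence
\[
\mathcal{M}_{\hat{R}}(z) = \mathcal{M}_R(z) + \sum_{p \in \hat{R}} \mu^{\hat{R}}_z[p, \hat{1}].
\]
So the whole problem reduces to evaluating $\sum_{p \in \hat{R}} \mu^{\hat{R}}_z[p,\hat{1}]$ and showing it equals $1 - z$.

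The key observation is that $\hat{R}$ has a unique maximum element, namely $\hat{1}$, so Lemma~\ref{unmaxmin} applies: $\mathcal{M}_{\hat{R}}(1) = 1$. More importantly, I want the full polynomial identity, not just the value at $1$. Since $R$ is a face poset, it has a unique minimum $\hat{0}^R = \emptyset$, and it has a unique maximum $\hat{1}^R$ (the top face). The element $\hat{1}$ of $\hat{R}$ covers exactly $\hat{1}^R$ and nothing else among... wait, no: $\hat{1}$ is above \emph{all} of $R$, but it \emph{covers} only $\hat{1}^R$, since $\hat{1}^R$ is above everything else in $R$. Therefore for the M\"obius computation: $\mu^{\hat{R}}[\hat{1},\hat{1}] = 1$, and for $p < \hat{1}$, $\mu^{\hat{R}}[p,\hat{1}] = -\sum_{p \le s < \hat{1}} \mu^{\hat{R}}[p,s] = -\sum_{p \le s \in R} \mu^R[p,s]$. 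By Lemma~\ref{Muiszero}, when $p = \hat{0}^R$ or more generally whenever $[p,\hat{1}^R]$ is all of $\{s : p \le s\}$ in $R$ — which is always the case since $\hat{1}^R$ is the unique max of $R$ — we get $\sum_{p \le s \in R}\mu^R[p,s] = \sum_{p \le s \le \hat{1}^R}\mu^R[p,s] = 0$ for every $p \in R$ with $p < \hat{1}^R$ (here I use that this sum vanishes because $p \ne \hat{1}^R$; it equals $1 \ne 0$ only when $p = \hat{1}^R$). Hence $\mu^{\hat{R}}[p,\hat{1}] = 0$ for all $p \in R \setminus \{\hat{1}^R\}$, while $\mu^{\hat{R}}[\hat{1}^R, \hat{1}] = -\mu^R[\hat{1}^R,\hat{1}^R] = -1$ and $\mu^{\hat{R}}[\hat{1},\hat{1}] = 1$.

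So the new intervals contribute only two nonzero terms: $[\hat{1},\hat{1}]$ with $\mu_z = 1$ and rank difference $0$, and $[\hat{1}^R,\hat{1}]$ with $\mu_z = -z^{|\hat{1}| - |\hat{1}^R|} = -z^{(d)-(d-1)} = -z$, where $d = |\hat{R}|$ so that $|\hat{1}^R| = d - 1$. Therefore $\sum_{p \in \hat{R}}\mu^{\hat{R}}_z[p,\hat{1}] = 1 - z$, and combining with the display above gives $\mathcal{M}_{\hat{R}}(z) = \mathcal{M}_R(z) + 1 - z$, as claimed.

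The main obstacle, and the step to write carefully, is the claim that $\mu^{\hat{R}}[p,\hat{1}] = 0$ for every $p \in R$ strictly below $\hat{1}^R$: this rests on $R$ having a unique maximal element (true for face posets of polytopes, whose top face $\hat{1}^R$ contains all vertices) so that $\{s \in R : p \le s\} = [p, \hat{1}^R]$, together with Lemma~\ref{Muiszero} applied to the interval $[p,\hat{1}^R]$ in $R$. One should also double-check the rank bookkeeping: adjoining $\hat{1}$ on top of $\hat{1}^R$ raises the rank by one, so $|\hat{1}| = d$ when $|R| = d - 1$... actually one must be careful that "$\hat{R}$" has rank one more than $R$, and the exponent on the $-z$ term is exactly $1$; this follows since $\hat{1}$ covers the rank-$(d-1)$ element $\hat{1}^R$.
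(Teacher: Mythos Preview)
Your proof is correct and follows essentially the same approach as the paper: decompose $\mathcal{M}_{\hat{R}}(z)$ as $\mathcal{M}_R(z)$ plus the contributions of intervals ending at the new top element $\hat{1}$, use that $R$ has unique maximum $\hat{1}^R$ together with Lemma~\ref{Muiszero} to show $\mu^{\hat{R}}[p,\hat{1}]=0$ for $p<\hat{1}^R$, and collect the two surviving terms $\mu_z[\hat{1},\hat{1}]=1$ and $\mu_z[\hat{1}^R,\hat{1}]=-z$. Your write-up is in fact a bit more careful than the paper's, which suppresses the $[\hat{1},\hat{1}]$ term in its initial decomposition.
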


\begin{proof}
We have 
$$\displaystyle \mathcal{M}_{\hat{R}} (z) = \mathcal{M}_R (z) + \mu_z^{\hat{R}} [\hat{1}^R, \hat {1}^{\hat{R}}] + \sum_{p \in R, p \neq \hat{1}^R} \mu_z^{\hat{R}} [p, \hat{1}].$$

We have $$\displaystyle \mu [p, \hat{1}] = -\sum_{p \leq q, q \in R} \mu [p,q] = \mu {p \leq x \leq \hat{1}^R} [p,x] = 0.$$

This means that $$ \displaystyle \mu_z^{\hat{R}} [p, \hat{1}] = 0.$$

Note that $$\displaystyle \mu_z^{\hat{R}} [\hat{1}^R, \hat{1}] = 1-z.$$

This means that

$$\displaystyle \mathcal{M}_{\hat{R}} (z) = \mathcal{M}_R (z) + 1 - z.$$ 
\end{proof}

\begin{lemma}
\label{dot}
$$\displaystyle \mathcal{M}_{\dot{R}} (z) = \mathcal{M}_{\hat{R}} (z) - (1-z) \cdot g_{|R|}^R(z)$$
\end{lemma}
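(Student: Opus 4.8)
The plan is to compute $\mathcal{M}_{\dot R}(z)$ directly from the definition and then rewrite the answer using Lemma~\ref{hat}. Since $\dot R = \bigl(R\setminus\{\hat 1^R\}\bigr)\cup\{\hat 1^{\hat R}\}$ with $\hat 1^{\hat R}$ above everything, every interval of $\dot R$ is of one of three types: (a) an interval $[p,q]$ with $p,q\in R\setminus\{\hat 1^R\}$; (b) the singleton $[\hat 1^{\hat R},\hat 1^{\hat R}]$; or (c) an interval $[p,\hat 1^{\hat R}]$ with $p\in R\setminus\{\hat 1^R\}$. For type (a), deleting $\hat 1^R$ changes neither the interval (as a set, since $q<\hat 1^R$ forces every $s\le q$ to lie in $R\setminus\{\hat 1^R\}$), nor its M\"obius value, nor the ranks of its endpoints; so these are precisely the intervals of $R$ whose top is not $\hat 1^R$, and by the identity $\sum_{p\in R}\mu_z^R[p,\hat 1^R]=g_{|R|}^R(z)$ for the top polynomial their total contribution is $\mathcal{M}_R(z)-g_{|R|}^R(z)$. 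Type (b) contributes $1$. So everything reduces to the type (c) contribution.

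For type (c), I would first compute $\mu^{\dot R}[p,\hat 1^{\hat R}]$ for $p\in R\setminus\{\hat 1^R\}$. Expanding the recursion, $\mu^{\dot R}[p,\hat 1^{\hat R}]=-\sum_{p\le s<\hat 1^{\hat R}}\mu^{\dot R}[p,s]$, where $s$ ranges over $R\setminus\{\hat 1^R\}$; since $\mu^{\dot R}[p,s]=\mu^R[p,s]$ for such $s$, and $\sum_{p\le s\le\hat 1^R}\mu^R[p,s]=0$ by Lemma~\ref{Muiszero} (applicable because $\hat 1^R$ is the unique maximum of $R$, so $p<\hat 1^R$), this collapses to $\mu^R[p,\hat 1^R]$. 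The crucial observation is about ranks: in $\dot R$ the element $\hat 1^{\hat R}$ keeps the rank it has in $\hat R$, namely $|R|+1$ (its rank as $\hat 1^{Gl}$ in the eventual gluing poset), whereas $\hat 1^R$ has rank $|R|$ in $R$; hence $\mu_z^{\dot R}[p,\hat 1^{\hat R}]=\mu^R[p,\hat 1^R]\,z^{(|R|+1)-|p|}=z\cdot\mu_z^R[p,\hat 1^R]$. Summing over $p\in R\setminus\{\hat 1^R\}$ gives $z\bigl(g_{|R|}^R(z)-1\bigr)$, the $-1$ accounting for the excluded term $\mu_z^R[\hat 1^R,\hat 1^R]=1$. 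Adding the three types, $\mathcal{M}_{\dot R}(z)=\bigl(\mathcal{M}_R(z)-g_{|R|}^R(z)\bigr)+1+z\bigl(g_{|R|}^R(z)-1\bigr)=\mathcal{M}_R(z)+1-z-(1-z)g_{|R|}^R(z)$, and replacing $\mathcal{M}_R(z)+1-z$ by $\mathcal{M}_{\hat R}(z)$ via Lemma~\ref{hat} gives the claim.

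I expect the main obstacle to be the rank bookkeeping on $\dot R$: one must be sure that removing $\hat 1^R$ does \emph{not} drop the rank of $\hat 1^{\hat R}$, since it is exactly this rank gap that produces the factor of $z$ multiplying $g_{|R|}^R(z)$ rather than $g_{|R|}^R(z)$ itself (without it one would be forced into the false identity $g_{|R|}^R(z)=1$). A secondary point is to invoke Lemma~\ref{Muiszero} only for genuine intervals, i.e.\ to check that $p\in R\setminus\{\hat 1^R\}$ implies $p<\hat 1^R$, which holds since $\hat 1^R$ is the unique maximum of the face poset $R$.
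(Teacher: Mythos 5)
Your proof is correct and rests on exactly the same two observations as the paper's: that $\mu^{\dot R}[p,\hat 1^{\hat R}]=\mu^R[p,\hat 1^R]$ by the vanishing identity of Lemma~\ref{Muiszero}, and that the rank gap $|\hat 1^{\hat R}|-|p|=|R|+1-|p|$ contributes the extra factor of $z$ multiplying $g_{|R|}^R(z)$. The only difference is bookkeeping --- you enumerate the intervals of $\dot R$ directly in three classes, while the paper writes $\mathcal{M}_{\dot R}$ as $\mathcal{M}_{\hat R}$ minus the deleted intervals plus a correction term --- and your version is, if anything, more careful about the rank subtlety the paper leaves implicit.
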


\begin{proof}
We have 

$$\displaystyle \mathcal{M}_{\dot{R}} (z) = \mathcal{M}_{\hat{R}} (z) - \sum_{p \in R} \mu_z^R [p, \hat{1}^R]  + E$$ 
where $E$ is the effect of deleting $\hat{1}^R$ on $\mu_z^{Gl(P, Q, R)} [p, \hat{1}^{Gl(P,Q,R)}]$ for all $p \in R$.

We will begin by proving 
$$\displaystyle E =  \sum_{p \in R} z^{|\hat{1}^{Gl(P,Q, R)}| - |p|} \cdot \mu [p, \hat{1}^R]$$

We know that in $\dot{R}$, $\mu_g [p, \hat{1}^{Gl(P,Q,R)}] = -\sum_{p \leq q < \hat{1}^{Gl(P,Q,R)}} z^{|\hat{1}| - |p|} \cdot \mu [p, q]$. Take $q \in [p, \hat{1}^{Gl(P,Q,R)}]$ in the poset $\hat{R}$ such that $q \neq \hat{1}^R$. We know that $q < \hat{1}^{Gl(P,Q,R)}$ in $\dot{R}$, so each term $z^{|\hat{1}^{Gl(P,Q,R)}| - |p|} \cdot \mu [p, q]$ is present in the sum, except for $z^{|\hat{1}^{Gl(P,Q,R)}| - |p|} \cdot \mu [p, \hat{1}^R]$. Hence, $z^{|\hat{1}^{Gl(P,Q,R)}| - |p|} \cdot \mu [p, \hat{1}^R]$ must be added back for all $p \in R$.

\begin{align*}
\mathcal{M}_{\dot{R}} (z) &= \mathcal{M}_{\hat{R}} (z) - \sum_{p \in R} \mu_z^R [p, \hat{1}^R]  + \sum_{p \in R} z^{|\hat{1}^{Gl(P,Q,R)}| - |p|} \cdot \mu [p, \hat{1}^R]\\
&= \mathcal{M}_{\hat{R}} (z) - \sum_{p \in R} \mu_z^R [p, \hat{1}^R] + z \cdot \sum_{p \in R} z^{|\hat{1}^R| - |p|} \cdot \mu [p, \hat{1}^R] \\
&= \mathcal{M}_{\hat{R}} (z) - (1-z) \cdot \sum_{p \in R} \mu_z^R [p, \hat{1}^R] \\
&= \mathcal{M}_{\hat{R}} (z) - (1-z) \cdot g_{|R|}^R (z). 
\end{align*}
\end{proof}

\begin{proof}[Proof of Theorem~$\ref{glue}$]
By the definition of $Gl(P, Q, R)$, 
\begin{align*}
\mathcal{M}_Gl (z) &= \mathcal{M}_P (z)  - \mathcal{M}_{\hat{R}_P} (z) + \mathcal{M}_Q (z) - \mathcal{M}_{\hat{R}_Q} (z) + \mathcal{M}_{\dot{R}} (z) \\
&= \mathcal{M}_P (z) + \mathcal{M}_Q (z) - 2 \cdot \mathcal{M}_{\hat{R}} (z) + \mathcal{M}_{\dot{R}} (z).
\end{align*}
If we apply Lemma~$\ref{hat}$ and Lemma~$\ref{dot}$, we have
\begin{align*}
\mathcal{M}_Gl (z) &= \mathcal{M}_P (z)  + \mathcal{M}_Q (z) - 2 \cdot \mathcal{M}_{\hat{R}} (z) + \mathcal{M}_{\hat{R}} (z) - (1-z) \cdot g_{|R|}^R (z) \\
&= \mathcal{M}_P (z) + \mathcal{M}_Q (z) -  \mathcal{M}_{\hat{R}} (z) - (1-z) \cdot g_{|R|}^R (z)\\
&= \mathcal{M}_P (z) + \mathcal{M}_Q (z) -  \mathcal{M}_R (z) - 1 +z - (1-z) \cdot g_{|R|}^R (z). \\
\end{align*}
\end{proof}

\section{Simplicial Polytopes}

\begin{definition}
We define a boolean lattice of $n$ elements, denoted $2^{[n]}$ to be the set of subsets of $\left\{1, 2, ..., n\right\}$ ordered by set inclusion.  
\end{definition}

\begin{definition}
We define a simplicial poset $P$ to be any ranked poset such that every interval $I \in I(P)$ is a boolean lattice.
\end{definition}

\begin{theorem}
\label{simp}
In a simplicial poset $P$ with rank $d$, a unique minimum element $\hat{0}$, and $f$-vector $(f_{-1}, f_0,...,f_{d})$
$$\displaystyle \mathcal{M}_P (z) = \sum_{-1 \le r \le d} (1-z)^{r +1} f_r.$$
\end{theorem}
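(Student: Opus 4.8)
The plan is to exploit the Boolean-lattice structure of intervals twice: once to pin down the Möbius function, and once to evaluate the resulting sum by the binomial theorem.

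First I would observe that in a simplicial poset every interval $[p,q]$ is isomorphic to a Boolean lattice $2^{[k]}$, and since $P$ is ranked, $k$ must equal the length of a maximal chain from $p$ to $q$, i.e.\ $k = |q|-|p|$. As $\mu^{2^{[k]}}[\hat 0,\hat 1] = (-1)^{k}$, this gives $\mu^P[p,q] = (-1)^{|q|-|p|}$ for all $p \le q$, hence $\mu_z^P[p,q] = (-z)^{|q|-|p|}$. (This is the simplicial analogue of Theorem~\ref{Faceuler}; note a simplicial poset need not be a face poset, so I would derive it directly from the Boolean-lattice hypothesis rather than cite that theorem.)

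Next I would reorganize the defining sum as
\[\mathcal{M}_P(z) = \sum_{q \in P}\ \sum_{p \le q} (-z)^{|q|-|p|},\]
and fix $q$ with $|q| = s$. Here the hypothesis that $P$ has a unique minimum $\hat 0$ is essential: it guarantees $\{p \in P : p \le q\} = [\hat 0, q]$, which is therefore a Boolean lattice on an $(s+1)$-element set (its rank is $|q| - |\hat 0| = s+1$). Counting: the elements of $[\hat 0,q]$ of rank $r$ correspond to the $(r+1)$-subsets of that $(s+1)$-element set, so there are $\binom{s+1}{r+1}$ of them. Substituting $j = r+1$ and applying the binomial theorem,
\[\sum_{p \le q} (-z)^{|q|-|p|} = \sum_{r=-1}^{s}\binom{s+1}{r+1}(-z)^{s-r} = \sum_{j=0}^{s+1}\binom{s+1}{j}(-z)^{s+1-j} = (1-z)^{s+1}.\]

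Finally I would sum this over all $q \in P$, grouping the $q$'s by rank: since there are $f_r$ elements of rank $r$,
\[\mathcal{M}_P(z) = \sum_{q \in P}(1-z)^{|q|+1} = \sum_{r=-1}^{d} f_r\,(1-z)^{r+1},\]
which is the claim. There is no serious obstacle here; the only points demanding care are the index/rank bookkeeping induced by the convention $|\hat 0| = -1$ (so that a rank-$s$ element sits at the top of a Boolean lattice with $s+1$ atoms, not $s$), and the explicit use of the unique-minimum hypothesis to identify the down-set of each $q$ with an interval. If one prefers, the inner computation can instead be phrased via Lemma~\ref{facepoly} applied to the interval $[\hat 0,q]$, whose $f$-vector is $\bigl(\binom{s+1}{0},\binom{s+1}{1},\dots,\binom{s+1}{s+1}\bigr)$, giving $g_{|[\hat 0,q]|}^{[\hat 0,q]}(z) = (1-z)^{s+1}$; this is an equivalent route to the same intermediate identity.
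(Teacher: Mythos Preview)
Your proposal is correct and follows essentially the same route as the paper: reorganize the double sum by fixing the top element $q$, use the unique minimum $\hat 0$ to identify $\{p \le q\}$ with the Boolean interval $[\hat 0,q]\cong 2^{[|q|+1]}$, evaluate the inner sum as $(1-z)^{|q|+1}$, and then group by rank. The only cosmetic difference is packaging: the paper isolates the inner computation as Lemma~\ref{boolpoly} (proved via $2^{[n]}\cong S^{n-1}$ and Lemma~\ref{facepoly}), whereas you derive $\mu^P[p,q]=(-1)^{|q|-|p|}$ and the binomial identity directly---a route you yourself note at the end as the ``equivalent'' alternative.
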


We will use the following lemma in our proof.
\begin{lemma}
\label{boolpoly}
In a boolean lattice, $2^{[n]}$ with rank $n-1$, 
$$\displaystyle g_{n-1}^{2^{[n]}} (z) = (1-z)^n$$
\end{lemma}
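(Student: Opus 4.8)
The plan is to use Lemma~\ref{facepoly} applied to the top polynomial of a boolean lattice, since $2^{[n]}$ is exactly the face poset of an $(n-1)$-dimensional simplex (indeed $2^{[n]} \cong S^{n-1}$, with the $r$-element subsets corresponding to the $(r-1)$-dimensional faces). Thus I would first recall that $g_{|P|}^P(z) = \sum_{r=-1}^{d} f_r \cdot (-z)^{d-r}$ by Lemma~\ref{facepoly}, with $d = n-1$ here. Then I would compute the $f$-vector of $2^{[n]}$: the number of rank-$r$ elements is the number of subsets of $\{1,\dots,n\}$ of size $r+1$, namely $f_r = \binom{n}{r+1}$, valid for $-1 \le r \le n-1$ (and consistent with $f_{-1} = \binom{n}{0} = 1$).

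Next I would substitute into the formula for the top polynomial:
\[
g_{n-1}^{2^{[n]}}(z) = \sum_{r=-1}^{n-1} \binom{n}{r+1} (-z)^{(n-1)-r}.
\]
Reindexing with $j = r+1$ (so $j$ ranges over $0,\dots,n$ and $(n-1)-r = n-j$) gives
\[
g_{n-1}^{2^{[n]}}(z) = \sum_{j=0}^{n} \binom{n}{j} (-z)^{n-j} = \sum_{j=0}^{n} \binom{n}{j} (-z)^{n-j} \cdot 1^{j},
\]
which by the binomial theorem equals $((-z) + 1)^n = (1-z)^n$. This is exactly the claimed identity.

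The computation is entirely routine once the two inputs — the $f$-vector of the boolean lattice and the top-polynomial formula from Lemma~\ref{facepoly} — are in hand, so there is no real obstacle; the only point requiring a word of care is the bookkeeping of the rank shift (a rank-$r$ element of the face poset is an $r$-dimensional face, corresponding to an $(r+1)$-element subset), which is why $f_r = \binom{n}{r+1}$ rather than $\binom{n}{r}$. Alternatively, one could avoid invoking Lemma~\ref{facepoly} entirely and instead note that $2^{[n]} \cong S^{n-1}$ so that $\mathcal{M}_{2^{[n]}}(z) = (2-z)^n$ by Corollary~\ref{simplex}, then peel off the contribution of intervals not containing $\hat 1$; but the direct substitution above is cleaner and I would present that.
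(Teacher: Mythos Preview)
Your proposal is correct and follows essentially the same argument as the paper: identify $2^{[n]}$ with the face poset $S^{n-1}$ so that Lemma~\ref{facepoly} applies, substitute $f_r=\binom{n}{r+1}$, and collapse the resulting sum via the binomial theorem. Your write-up is in fact slightly more explicit about the reindexing step than the paper's own proof.
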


\begin{proof}
Since $S^{n-1} \cong 2^{[n]}$, $2^{[n]}$ is isomorphic to a face poset.
Hence, by Lemma~$\ref{facepoly}$, if $(f_{-1}, f_0,...f_{n-1})$ is the $f$-vector of $2^{[n]}$, then 
$$\displaystyle g_{n-1}^{2^{[n]}} (z) = \sum_{r=-1}^{n-1} f_r (-z)^{n-1-r}.$$
Notice that $f_r = {n \choose {r+1}}$. It follows that
$$\displaystyle g_{n-1}^{2^{[n]}} (z) = \sum_{r=-1}^{n-1} {n \choose r+1}  (-z)^{n-(r+1)}.$$
Applying the binomial theorem, we obtain
$$\displaystyle g_{n-1}^{2^{[n]}} (z) = (1-z)^n$$
\end{proof}

\begin{proof}[Proof of Theorem~$\ref{simp}$]

Notice that 
$$\displaystyle \mathcal{M}_{P} (z) = \sum_{p \in P} \sum_{q \le p} \mu_z [q, p].$$

Take $p \in P$ with rank $r$. Notice that
$$\displaystyle [\hat{0}, p] = \left\{q \in P_{\ge d-1} \mid q \le p \right\}.$$
Hence,  

\begin{align*}
\sum_{q \le p} \mu_z [q, p] &= \sum_{q \in [\hat{0},p]} \mu_z [q, p] \\
&= g^{[\hat{0},p]}_{|p|} (z)
\end{align*}

By the definition of a simplicial poset, we know $[\hat{0}, p] \cong 2^{[r+1]}$. Hence, by Lemma~$\ref{boolpoly}$, we have

$$\displaystyle \sum_{q \le p} \mu_z [q, p] = (1-z)^{r + 1}.$$

It follows that,

$$\displaystyle \mathcal{M}_{P} (z) = \sum_{p \in P} (1-z)^{r + 1}.$$

Grouping the elements by rank, we obtain

$$\displaystyle \mathcal{M}_{P} (z) = \sum_{-1 \le r \le d} (1-z)^{r + 1} f_r.$$
\end{proof}

\begin{definition}
We define a simplicial polytope to be a $d$-dimensional convex polytope $\mathpzc{P}$ such that any $(d-1)$-dimensional face is a simplex. 
\end{definition}

\begin{definition}
We define a near-simplicial poset $P$ to be any ranked poset such that $P_{\le |P|-1}$ is a disjoint union of simplicial posets with unique minimum elements. 
\end{definition}

Notice that the face poset $P$ of a simplicial polytope $\mathpzc{P}$ is near-simplicial.  

\begin{theorem}
\label{nearsimp}
In a near-simplicial poset $P$ with rank $d$ and $f$-vector $(f_{-1}, f_0,...f_{d})$
$$\displaystyle \mathcal{M}_P (z) = g^P_{d} (z) + \sum_{-1 \le r \le d -1} (1-z)^{r +1} f_r.$$
\end{theorem}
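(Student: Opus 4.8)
The plan is to reduce Theorem~\ref{nearsimp} to Theorem~\ref{simp} by splitting the M\"obius polynomial according to the rank of the \emph{top} element of each interval. Writing $\mathcal{M}_P(z) = \sum_{p \in P} \sum_{q \le p} \mu_z[q,p]$, I would separate the outer sum into the terms with $|p| = d$ and the terms with $|p| \le d-1$. The first group is, by definition of the $r$-polynomial, exactly $g^P_d(z) = \sum_{p \le \hat 1 \le q}\mu_z[p,q]$ — here one uses that $d = |P|$ so the only $q$ with $\hat 1 \le q$ considerations collapse, matching the computation in Lemma~\ref{facepoly}; more precisely $\sum_{|p|=d}\sum_{q\le p}\mu_z[q,p] = \sum_{p \in P_{\le d}}\sum_{d \le |p|}\mu_z[\,\cdot\,] = g^P_d(z)$, so this group contributes the first summand verbatim.

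For the second group, I would use the hypothesis that $P_{\le d-1}$ is a disjoint union of simplicial posets $P^{(1)}, \dots, P^{(k)}$, each with a unique minimum element. Since the $P^{(i)}$ are the connected components of $P_{\le d-1}$, every interval $[q,p]$ with $|p| \le d-1$ lies entirely within a single $P^{(i)}$, and the M\"obius function of $P$ restricted to such an interval agrees with the M\"obius function computed inside $P^{(i)}$ (intervals, and hence M\"obius values, are intrinsic). Therefore $\sum_{|p| \le d-1}\sum_{q \le p}\mu_z[q,p] = \sum_{i=1}^k \mathcal{M}_{P^{(i)}}(z)$. Now apply Theorem~\ref{simp} to each $P^{(i)}$: if $P^{(i)}$ has $f$-vector $(f^{(i)}_{-1}, f^{(i)}_0, \dots)$, then $\mathcal{M}_{P^{(i)}}(z) = \sum_{r} (1-z)^{r+1} f^{(i)}_r$.

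The last step is to recombine. Summing over $i$, the coefficient of $(1-z)^{r+1}$ becomes $\sum_{i=1}^k f^{(i)}_r$, which is the total number of rank-$r$ elements of $P_{\le d-1}$; for $-1 \le r \le d-1$ this equals $f_r$, the rank-$r$ count in $P$ (nothing of rank $\le d-1$ is lost when passing to the components, since the components partition $P_{\le d-1}$). Hence $\sum_{i}\mathcal{M}_{P^{(i)}}(z) = \sum_{-1 \le r \le d-1}(1-z)^{r+1} f_r$, and adding the $g^P_d(z)$ term from the first group gives the claimed formula.

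The main obstacle is the careful bookkeeping in the first paragraph: one must check that $\sum_{|p| = d}\sum_{q \le p}\mu_z[q,p]$ genuinely equals $g^P_d(z)$ rather than something needing an extra correction — i.e. that every interval counted in $g^P_d(z)$ has its top element of rank exactly $d$ and is counted exactly once, which holds because $\hat 1 \le q$ forces $q = \hat 1$ only if $P$ has a unique maximum; if $P$ (the face poset of a simplicial polytope) does have a unique maximum this is immediate, and otherwise one should read $g^P_d(z) = \sum_{|p| \le d \le |q|}\mu_z[p,q] = \sum_{|q| = d}\sum_{p \le q}\mu_z[p,q]$ directly from the definition of the $r$-polynomial, which is exactly the first group. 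A secondary point worth stating explicitly is the intrinsity of M\"obius values on intervals lying in a single component, so that Theorem~\ref{simp} can be invoked componentwise; this is routine but should not be skipped.
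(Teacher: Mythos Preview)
Your proposal is correct and follows essentially the same route as the paper: split $\mathcal{M}_P(z)$ as $g^P_d(z) + \mathcal{M}_{P_{\le d-1}}(z)$, then apply Theorem~\ref{simp} componentwise to the disjoint simplicial pieces and recombine the $f$-counts. The paper states the splitting $\mathcal{M}_P(z) = g^P_d(z) + \mathcal{M}_{P_{\le d-1}}(z)$ without comment, whereas you justify it by unpacking the definition of the $r$-polynomial; your extra care about whether $P$ has a unique maximum and about intrinsity of M\"obius values on intervals is sound but not needed beyond what the paper assumes.
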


\begin{proof}

Notice that $\mathcal{M}_P (z) = g^P_{d} (z) + \mathcal{M}_{P_{\le d-1}} (z)$. First, we consider $\mathcal{M}_{P_{\le d-1}} (z)$. Since $P_{\le d-1}$ is a disjoint union of simplicial posets with unique minimum elements, we can obtain $\mathcal{M}_{P_{\le d-1}}$ by taking the sum of the M\"{o}bius polynomials of each of these simplicial posets. By Theorem~$\ref{simp}$, 
$$\displaystyle \mathcal{M}_{P_{\le d-1}} (z) = \sum_{-1 \le r \le d-1} (1-z)^{r + 1} f_r.$$

It follows that,

$$\displaystyle \mathcal{M}_P (z) = g^P_{d} (z) + \sum_{-1 \le r \le d -1} (1-z)^{r +1} f_r.$$

\end{proof}

In a face poset $P$ with rank $d$, by Lemma~$\ref{facepoly}$, we know that $g_{d}^{P} (z) = \sum_{r=-1}^{d} f_r (-z)^{d-r}$. Hence, we can simplify Theorem~$\ref{simp}$ for the face posets of simplicial polytopes. 

\begin{cor}
Given a $d$-dimensional simplicial polytope $\mathpzc{P}$ with face poset $P$, 

$$\displaystyle \mathcal{M}_P (z) = \sum_{r=-1}^{d} f_r (-z)^{d-r} + \sum_{-1 \le r \le d -1} (1-z)^{r +1} f_r.$$
\end{cor}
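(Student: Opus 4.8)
The plan is to combine the two theorems that precede this corollary. By hypothesis $\mathpzc{P}$ is a $d$-dimensional simplicial polytope with face poset $P$, and as noted in the text immediately after the definition of a near-simplicial poset, $P$ is near-simplicial: every proper face of $\mathpzc{P}$ is a simplex, so $P_{\le d-1}$ decomposes as the disjoint union of the face posets of those simplex faces, each of which is a boolean lattice (hence a simplicial poset) with a unique minimum element $\hat{0}^P$. Therefore Theorem~\ref{nearsimp} applies directly and gives
\[\mathcal{M}_P (z) = g^P_{d} (z) + \sum_{-1 \le r \le d -1} (1-z)^{r +1} f_r.\]

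The only remaining task is to replace $g^P_d(z)$ with an explicit expression in the $f$-vector. Since $P$ is the face poset of a convex polytope, it is in particular a ranked poset of rank $d$ to which Lemma~\ref{facepoly} applies, and that lemma's first formula says precisely
\[g^P_{d}(z) = \sum_{r=-1}^{d} f_r \cdot (-z)^{d-r}.\]
Substituting this into the displayed formula above yields
\[\mathcal{M}_P (z) = \sum_{r=-1}^{d} f_r (-z)^{d-r} + \sum_{-1 \le r \le d -1} (1-z)^{r +1} f_r,\]
which is the claimed identity. (Note $g^P_{|P|} = g^P_d$ since $|P| = d$ for the face poset of a $d$-dimensional polytope, so the two notations agree.)

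There is essentially no obstacle here: the corollary is a pure bookkeeping combination of Theorem~\ref{nearsimp} and Lemma~\ref{facepoly}, and the only point requiring a sentence of justification is the observation — already made in the surrounding text — that the face poset of a simplicial polytope is near-simplicial, which in turn rests on the fact that the face poset of a simplex is a boolean lattice (this is the isomorphism $S^{n-1} \cong 2^{[n]}$ used in Lemma~\ref{boolpoly}) and that the interval $[\hat{0}^P, p]$ for a face $p$ is exactly the face poset of the face $\mathpzc{p}$. So the write-up will be three or four lines: invoke near-simpliciality, quote Theorem~\ref{nearsimp}, quote Lemma~\ref{facepoly} for $g^P_d$, and combine.
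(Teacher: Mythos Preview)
Your proposal is correct and follows exactly the paper's own argument: the text preceding the corollary already invokes Lemma~\ref{facepoly} to evaluate $g^P_d(z)$ and then substitutes into Theorem~\ref{nearsimp}, which is precisely what you do. One small wording quibble: $P_{\le d-1}$ is not literally a disjoint union of several face posets (they all share $\hat{0}^P$), but rather a single simplicial poset with unique minimum $\hat{0}^P$; this still fits the definition of near-simplicial (a ``disjoint union'' of one piece), so the conclusion stands.
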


\section{Eulerian Posets}

\begin{definition}
An Eulerian poset is a ranked poset $P$ with unique maximum and minimum elements such that, for all $p, q \in P$ with $p \le q$,
$$\displaystyle \mu [p,q] = (-1)^{|q| - |p|}.$$ 
\end{definition}

Note that any interval of an Eulerian poset is itself an Eulerian poset. By Lemma~$\ref{Faceuler}$, we know all face posets of convex polytopes are Eulerian posets. In this section, we will discuss the M\"{o}bius polynomials of Eulerian posets. Fix an Eulerian poset $P$ of rank $d$. We define $N_{i,j}$ to be the number of intervals $[p, q]$ such that $p$ is an element of rank $i$ and $q$ is an element of rank $j$ for $-1 \le i \le j$. 
We can calculate the M\"{o}bius polynomial of $P$ in terms of the $N_{i, j}$ values:
$$\displaystyle \mathcal{M}_P (z) = \sum_{0 \le l \le n+1} \sum_{j-i = l} N_{i, j} (-z)^{l}$$ 

Hence, to calculate the M\"{o}bius polynomial of an Eulerian poset $P$, it suffices to calculate $N_{i,j}$ for $-1 \le i \le j \le |P|$.
We know that $N_{-1, r}, N_{r, |P|},$ and $N_{r, r}$ are all equal to $f_r$. 
It remains to calculate $N_{i,j}$ for $0 \le i < j < d$. We will use the following lemmas: 

\begin{lemma}
\label{eulerchar}
For an Eulerian poset $P$ with rank $d$ and $f$-vector 

$(f_{-1}, f_0,..., f_d)$, we have 
$$\displaystyle \sum_{i=0}^{d-1} f_i \cdot (-1)^i = (-1)^{d + 1} + 1$$
\end{lemma}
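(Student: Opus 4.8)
The plan is to extract the identity from the defining property of an Eulerian poset applied to the single largest interval $[\hat 0,\hat 1]$, where $\hat 0$ is the unique minimum (of rank $-1$) and $\hat 1$ the unique maximum (of rank $d$). Since $P$ is Eulerian, the definition gives directly
\[
\mu[\hat 0,\hat 1] = (-1)^{|\hat 1| - |\hat 0|} = (-1)^{d-(-1)} = (-1)^{d+1}.
\]

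Next I would expand the left-hand side using the recursive definition of the M\"obius function, namely $\mu[\hat 0,\hat 1] = -\sum_{\hat 0 \le s < \hat 1} \mu[\hat 0, s]$. The elements $s$ with $\hat 0 \le s < \hat 1$ are precisely all elements of $P$ other than $\hat 1$ itself (because $\hat 0$ is below everything and $\hat 1$ is above everything), so they are exactly the elements of rank $r$ for $-1 \le r \le d-1$. For each such $s$ of rank $r$, the Eulerian property again gives $\mu[\hat 0, s] = (-1)^{r-(-1)} = (-1)^{r+1}$. Grouping the sum by rank and recalling that there are $f_r$ elements of rank $r$, this yields
\[
(-1)^{d+1} = \mu[\hat 0,\hat 1] = -\sum_{r=-1}^{d-1} f_r\,(-1)^{r+1}.
\]

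Finally I would split off the $r=-1$ term, using $f_{-1}=1$, to write $\sum_{r=-1}^{d-1} f_r(-1)^{r+1} = 1 - \sum_{r=0}^{d-1} f_r(-1)^{r}$; substituting and rearranging gives $\sum_{i=0}^{d-1} f_i(-1)^i = (-1)^{d+1}+1$, as claimed. I do not anticipate a real obstacle here: the statement is just the Euler--Poincar\'e relation repackaged, and the only thing requiring care is the bookkeeping of the rank $-1$ term and the signs in the alternating sum. (This also mirrors the Euler-characteristic step already used in the proof of Theorem~\ref{Faceuler}, so one could alternatively cite that computation almost verbatim.)
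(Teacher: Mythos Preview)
Your proof is correct and essentially identical to the paper's: the paper invokes Lemma~\ref{Muiszero} to write $\sum_{\hat 0\le p\le \hat 1}\mu[\hat 0,p]=0$, groups by rank, applies the Eulerian condition $\mu[\hat 0,p]=(-1)^{|p|+1}$, and then moves the $i=-1$ and $i=d$ terms to the other side---which is exactly your computation with the recursive definition rewritten as the vanishing of the full sum. The only cosmetic difference is that you isolate the top term $\mu[\hat 0,\hat 1]$ on one side from the outset rather than citing Lemma~\ref{Muiszero}.
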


\begin{proof}
By Lemma~\ref{Muiszero}
$$\displaystyle \sum_{\hat{0} \le p \le \hat{1}} \mu [\hat{0}, p] = 0.$$
Separating the elements by
 rank, we get: 
$$\displaystyle \sum_{i= -1}^d \sum_{p \in P_i} \mu [\hat{0}, p] = 0.$$
Since $\mu [\hat{0}, p]$ is $(-1)^{|p| + 1}$, we get: 
$$\displaystyle \sum_{i= -1}^d  (-1)^{i+1} f_i = 0.$$
Moving the $i = -1$ and $i = d$ terms to the right side, we get:
$$\displaystyle \sum_{i= 0 }^{d-1}  (-1)^{i+1}  f_i = (-1)[(-1)^{d +1} + 1].$$
Hence,
$$\displaystyle \sum_{i= 0 }^{d-1} (-1)^{i} f_i  = (-1)^{d +1} + 1.$$
\end{proof}

\begin{lemma}
\label{System}
For an Eulerian poset $P$ with rank $d$, and $f$-vector 

$(f_{-1}, f_0...f_d)$,  we have the following system of $2d-2$ equations: 

\begin{itemize}
\item[(i)] For $1 \le j \le d-1,$
\[\sum_{i=0}^{j-1} (-1)^i N_{i, j}  = ((-1)^{j+1} + 1) f_j.\]
\item[(ii)] For $0 \le j \le d-2$,
\[\sum_{j=i+1}^{d-1} (-1)^{j-i-1} N_{i, j} = ((-1)^{d-i} + 1) f_i\]
\end{itemize}
\end{lemma}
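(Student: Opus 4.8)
The plan is to extract the two families of equations from the two "Möbius sum is zero" identities of Lemma~\ref{Muiszero}, applied not to the whole poset but to every interval of a fixed rank. Fix an element $q$ of rank $j$ with $1 \le j \le d-1$. By Lemma~\ref{Muiszero} applied to the interval $[\hat{0}, q]$ we have $\sum_{\hat{0} \le p \le q} \mu[p,q] = 0$. Since $P$ is Eulerian, every such interval $[\hat 0, q]$ is itself Eulerian, so $\mu[p,q] = (-1)^{|q|-|p|}$; grouping the $p$'s by rank $i$ and writing $c_i(q)$ for the number of elements of rank $i$ below $q$, this reads $\sum_{i=-1}^{j} (-1)^{j-i} c_i(q) = 0$. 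Now I would sum this identity over all $q \in P_j$: the $i=j$ term contributes $f_j$, the $i=-1$ term contributes $(-1)^{j+1} f_j$ (each $q$ lies above the unique $\hat 0$), and for $0 \le i \le j-1$ the sum $\sum_{q \in P_j} c_i(q)$ counts exactly the pairs $(p,q)$ with $|p|=i$, $|q|=j$, $p \le q$, which is $N_{i,j}$. Rearranging the $i=-1$ and $i=j$ terms to the right-hand side gives $\sum_{i=0}^{j-1} (-1)^{j-i} N_{i,j} = (-1)^{j}\big(f_j + (-1)^{j+1} f_j \cdot (-1)\big)$; after pulling out $(-1)^j$ and simplifying the signs this is exactly part~(i), namely $\sum_{i=0}^{j-1}(-1)^i N_{i,j} = ((-1)^{j+1}+1) f_j$. (I will need to track the sign bookkeeping carefully here, since it is easy to be off by an overall $(-1)^j$.)

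For part~(ii), I would run the dual argument. Fix an element $p$ of rank $i$ with $0 \le i \le d-2$, and apply the other half of Lemma~\ref{Muiszero} to the interval $[p, \hat 1]$: $\sum_{p \le q \le \hat 1} \mu[p,q] = 0$. Again $[p,\hat 1]$ is Eulerian, so $\mu[p,q] = (-1)^{|q|-|p|}$, and grouping the $q$'s by rank $j$ gives $\sum_{j=i}^{d} (-1)^{j-i} d_j(p) = 0$, where $d_j(p)$ is the number of rank-$j$ elements above $p$. Summing over $p \in P_i$: the $j=i$ term gives $f_i$, the $j=d$ term gives $(-1)^{d-i} f_i$ (each $p$ lies below the unique $\hat 1$), and for $i+1 \le j \le d-1$ the sum $\sum_{p \in P_i} d_j(p)$ is again the pair-count $N_{i,j}$. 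Moving the $j=i$ and $j=d$ terms over and dividing by the appropriate sign yields $\sum_{j=i+1}^{d-1} (-1)^{j-i-1} N_{i,j} = ((-1)^{d-i}+1) f_i$, which is part~(ii). Counting: part~(i) gives one equation for each $j \in \{1,\dots,d-1\}$, i.e. $d-1$ equations, and part~(ii) gives one for each $i \in \{0,\dots,d-2\}$, i.e. another $d-1$ equations, for a total of $2d-2$, as claimed.

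The only real obstacle is the sign and index bookkeeping in the two summations — making sure the boundary terms ($i=-1$, $j=d$, and the diagonal $i=j$) are peeled off correctly and that the leftover sum carries the sign $(-1)^i$ (resp. $(-1)^{j-i-1}$) written in the statement rather than its negative. Everything else is a direct double-counting of incidences $N_{i,j}$ combined with the Eulerian value of $\mu$ and the two vanishing identities already established. Note also that part~(i) with $j = d-1$ and part~(ii) with $i=0$ each recover Lemma~\ref{eulerchar} as a special case (using $N_{-1,r}=N_{r,d}=f_r$ together with $N_{i,i}=f_i$), which serves as a useful consistency check on the signs.
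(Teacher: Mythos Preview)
Your proposal is correct and follows essentially the same route as the paper: the paper introduces $B_{i,p}$ and $A_{p,j}$ (your $c_i(q)$ and $d_j(p)$), applies the Euler-type identity of Lemma~\ref{eulerchar} to each interval $[\hat 0,p]$ or $[p,\hat 1]$, and then sums over $P_j$ (resp.\ $P_i$) using the double-count $\sum B_{i,p}=N_{i,j}$. The only cosmetic difference is that the paper packages the boundary-term peeling into Lemma~\ref{eulerchar} rather than invoking Lemma~\ref{Muiszero} directly; your closing ``consistency check'' remark is slightly off (part~(i) at $j=d-1$ involves $N_{i,d-1}$, not the $f_i$), but this does not affect the argument.
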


\begin{proof}

We begin by proving (i). For an element $p \in P$, let $B_i^p$ be the number of elements of rank $i$ below $p$. That is, 
$$\displaystyle B_{i, p} = |P_i \cap \left\{q \in P \mid q \le p \right\}|.$$
Notice that $\sum_{p \in P_j} B_{i, p} = N_{i,j}$. Thus, we have: 

\begin{align*}
\sum_{i=0}^{j-1} (-1)^i N_{i, j} &= \sum_{i=0}^{j-1} \sum_{p \in P_j} (-1)^i B_{i, p} \\
&= \sum_{p \in P_j} \sum_{i=0}^{j-1} (-1)^i B_{i,p}.
\end{align*}

We know that the interval $[\hat{0}, p]$ is an Eulerian poset for all $p \in P$. Thus for all $p \in P$, Lemma~$\ref{eulerchar}$ gives us
$$\displaystyle \sum_{i=0}^{j-1} (-1)^i B_{i, p} = (-1)^{j+1} + 1.$$
Thus, we have
\begin{align*}
\sum_{i=0}^{j-1} (-1)^i N_{i, j} &= \sum_{p \in P_j} \left[(-1)^{j+1} + 1 \right] \\
&= \left[(-1)^{j+1} + 1 \right] f_j.
\end{align*}

The proof of (ii) is similar. For each $p \in P$, we define $A_{p, j}$ to be the number of elements of rank $j$ above $p$. That is,
$$\displaystyle A_{p, j} = |P_j \cap \left\{q \in P \mid q \ge p \right\}|.$$
Notice that $\sum_{p \in P_i} A_{p, j} = N_{i, j}$. Thus, we have
\begin{align*}
\sum_{j=i+1}^{d-1} (-1)^{j-i-1} N_{i, j} &= \sum_{j=i+1}^{d-1} \sum_{p \in P_i} (-1)^{j-i-1} A_{p, j} \\
&= \sum_{p \in P_i} \sum_{j=i+1}^{d-1} (-1)^{j-i-1} A_{p,j}. 
\end{align*}
Since the interval $[p, \hat{1}]$ is Eulerian, Lemma~$\ref{eulerchar}$ gives us  gives us
$$\displaystyle \sum_{j=i+1}^{d-1} (-1)^{j-i-1} A_{p, j} = (-1)^{d-i} + 1.$$
It follows that
\begin{align*}
\sum_{j=i+1}^{d-1} (-1)^{j-i-1} N_{i, j} &= \sum_{p \in P_i} \left[(-1)^{d-i} + 1 \right] \\
&= \left[(-1)^{d-i} + 1 \right] f_i.
\end{align*}
\end{proof}

Notice that of the equations in (i) of Lemma~$\ref{System}$, 
$$\displaystyle \sum^{d-1}_{i=0} N_{i,d-1} = (-1)^d +1$$
is a linear combination of the others.
We will show that the remaining $(2d-3)$ equations are linearly independent by solving these equations in the Proof of Theorem~$\ref{GenMob}$.

In the case where $d=3$, we can calculate the $N_{i,j}$ from $f_0, f_1,$ and $f_2$. This gives us the following: 

\begin{lemma}
\label{three}
For an Eulerian poset $P$ with rank $3$ and $f$-vector 

$(f_{-1}, f_0, f_1, f_2, f_3)$, 
$$\displaystyle \mathcal{M}_P (z) = (2+f_0+f_1+f_2) - (f_0+4f_1+f_2)*z + 4f_1 z^2 - (f_0+f_2) z^3 + z^4.$$ 
\end{lemma}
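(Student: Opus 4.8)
The plan is to specialize the Eulerian‑poset expansion of the M\"obius polynomial in terms of the interval counts, namely
\[\mathcal{M}_P(z) = \sum_{l=0}^{d+1} (-z)^{l} \sum_{j-i=l} N_{i,j},\]
to the case $d=3$. First I would record the entries that are already pinned down: as noted just before Lemma~\ref{eulerchar}, $N_{r,r}=N_{-1,r}=N_{r,d}=f_r$ for every admissible $r$, and moreover $f_{-1}=f_3=1$ since an Eulerian poset has a unique minimum and a unique maximum. For a rank‑$3$ poset the only interval counts not forced by these identities are $N_{0,1}$, $N_{0,2}$, and $N_{1,2}$.

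Next I would invoke Lemma~\ref{System} with $d=3$. Part (i) with $j=1$ gives $N_{0,1}=2f_1$, and with $j=2$ gives $N_{0,2}-N_{1,2}=0$; part (ii) with $i=0$ gives $N_{0,1}-N_{0,2}=0$, and with $i=1$ gives $N_{1,2}=2f_1$. These four relations are mutually consistent and together force $N_{0,1}=N_{0,2}=N_{1,2}=2f_1$, so the only point requiring any care — that this slightly overdetermined system genuinely determines the three unknowns — is dispatched immediately.

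Finally I would substitute these values into the $N_{i,j}$‑expansion and collect the coefficient of $(-z)^{l}$ for $l=0,1,2,3,4$: the $l=0$ sum is $1+f_0+f_1+f_2+1$, the $l=1$ sum is $N_{-1,0}+N_{0,1}+N_{1,2}+N_{2,3}=f_0+4f_1+f_2$, the $l=2$ sum is $N_{-1,1}+N_{0,2}+N_{1,3}=4f_1$, the $l=3$ sum is $N_{-1,2}+N_{0,3}=f_0+f_2$, and the $l=4$ sum is $N_{-1,3}=1$. Weighting each by $(-z)^{l}$ and adding gives exactly
\[(2+f_0+f_1+f_2)-(f_0+4f_1+f_2)z+4f_1 z^2-(f_0+f_2)z^3+z^4,\]
as claimed. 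There is no substantive obstacle here: all of the work is already contained in Lemma~\ref{System}, and the remainder is pure bookkeeping, so the argument is essentially a direct substitution.
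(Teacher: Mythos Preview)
Your proposal is correct and follows essentially the same route as the paper's proof: both invoke Lemma~\ref{System} at $d=3$ to determine $N_{0,1}=N_{0,2}=N_{1,2}=2f_1$, and then substitute into the expansion $\mathcal{M}_P(z)=\sum_{l}(-z)^l\sum_{j-i=l}N_{i,j}$, computing each coefficient exactly as you do. The only cosmetic difference is that you record the extra equation from part~(ii) with $i=1$ (giving $N_{1,2}=2f_1$ directly) and remark on the consistency of the overdetermined system, whereas the paper simply selects three of the four equations and solves.
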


\begin{proof}
By Lemma~$\ref{System}$ we have three equations in three variables: 
\begin{align*}
 N_{0,1} &= 2f_1 \\
 N_{0,2} - N_{1,2} &= 0 \\
 N_{0,1} - N_{0,2} &= 0. \\
\end{align*}
Solving, we obtain $N_{0,1} = 2f_1, N_{0,2} = 2f_1, N_{1,2} = 2f_1$. We know 
$$\displaystyle \mathcal{M}_P (z)= \sum_{0 \le l \le 4} \sum_{j-i = l} N_{i, j} \cdot (-z)^{l}.$$
We will calculate $\sum_{j-i = l} N_{i, j}$ for $l$ from $0$ to $4$. 

For $l=0$, we have
\begin{align*}
\sum_{j-i = 0} N_{i, j} &= N_{-1, -1} + N_{0,0} + N_{1,1} + N_{2,2} + N_{3,3} \\
&= f_{-1}+f_0+f_1+f_2 + f_{-3} \\
&= 2 + f_0 + f_1 + f_2. 
\end{align*}

For $l=1$, we have
\begin{align*}
\sum_{j-i = 1} N_{i, j} &= N_{-1,0} + N_{0,1} + N_{1,2} + N_{2,3} \\
&= f_0 + 2f_1 + 2f_1 + f_2 \\
&= f_0 + 4f_1 + f_2. 
\end{align*}

For $l=2$, we have
$$\displaystyle \sum_{j-i = 2} N_{i, j} = N_{-1,1} + N_{0,2} + N_{1,3} = f_1 + 2f_1 + f_1 = 4f_1.$$ 

For $l=3$, we have
$$\displaystyle \sum_{j-i = 3} N_{i, j} = N_{-1,2} + N_{0,3} = f_0 + f_2.$$

For $l=4$, we have
$$\displaystyle \sum_{j-i = 4} N_{i, j} = N_{-1,3} = 1.$$

It follows that:
$$\displaystyle \mathcal{M}_P (z) = (2+f_0+f_1+f_2) - (f_0+4f_1+f_2)*z + 4f_1 z^2 - (f_0+f_2) z^3 + z^4.$$ 
\end{proof}

For Eulerian posets of rank greater than $3$, we cannot calculate the M\"{o}bius polynomial in terms of the the $f_r$ values. To see that this is the case, consider the following example:

By Steinitz' Lemma \cite{stein}, we know that we can construct a convex polytope $\mathpzc{K}$ with $\mathpzc{f}$-vector $(1, 18, 38, 22, 1)$. Let $\mathpzc{P}$ be the convex polytope given by $\mathpzc{P} = \mathpzc{Py (K)}$. 

Let
$$\displaystyle\mathpzc{Q} = \mathpzc{Gl(Gl(Gl(C^4, Py(C^3), C^3), Py(C^3), C^3), Py(C^3), C^3)}.$$

By construction, both $\mathpzc{P}$ and $\mathpzc{C^i}$ have the same $\mathpzc{f}$-vector: 
$$\displaystyle (1, 19, 56, 60, 23, 1).$$

Using Theorem~$\ref{three}$ and Theorem~$\ref{pyramid}$, we can show that
\begin{align*}
\mathcal{M}_P (z) &= (2 - z) (80 - 192 z + 152 z^2 - 40 z^3 + z^4) \\
&= 160 - 464 z + 496 z^2 - 232 z^3 + 42 z^4 - z^5.
\end{align*} 

Using the formulas in Theorem~$\ref{glue}$, Theorem~$\ref{pyramid}$, and Theorem~$\ref{hypercube}$, we find that $\mathcal{M}_Q (z)$ is equal to
\begin{align*}
&(-2z + 3)^4 - z (2-z)^4 + 1 + 3 (2 - z) ((-2 z + 3)^3 - z (2 - z)^3 + 1) \\
&-3 ((-2 z + 3)^3 - z (2 - z)^3 + 1) - 3 (1 - z) \\
&- 3 (1 - z) (1 - 6 z + 12 z^2 - 8 z^3 + z^4 )
\end{align*}
which reduces to
 $$\displaystyle 160 - 478 z + 524 z^2 - 246 z^3 + 42 z^4 - z^5.$$
Hence, we have found two face posets with the same $f$-vector but different M\"{o}bius polynomials. This demonstrates that the M\"{o}bius polynomial cannot be calculated from the $f$-vector for an arbitrary Eulerian poset of rank greater than $3$. 

We will now choose $N_{i,j}$ values to be given along with the $f_r$ values to make the system in Lemma~$\ref{System}$ solvable. The other  $N_{i,j}$ values can be solved in terms of these given values. 

Assuming the equations in Lemma~$\ref{System}$ are linearly independent, we should be able to calculate the $\mathcal{M}_P (z)$ from the $f$-vector and some collection of all but $2d-3$ of the $N_{i, j}$. The following theorem shows that this is indeed possible:

\begin{theorem}
\label{GenMob}
Let $P$ be a rank-$d$ Eulerian poset. Given the $f$-vector $(f_{-1}, f_0,...,f_d)$ and the values $N_{i,j}$ for $0 < i+1 < j < d-1$, we have
\begin{align*}
\mathcal{M}_P (z) &= \sum_{i=-1}^d f_i - z (f_0 + 2f_1 + 2f_{d-2} + f_{d-1} + \sum_{i=0}^{d-3} ((-1)^{i} +1) f_{i+1}) \\
&+ (\sum_{0 \le k < i \le d-3} (-1)^{i+1-k} N_{k,i+1})) + \sum_{l=2}^{d-2} (-z)^l (f_{l-1} + f_{d-l} \\
&+ \sum^{d-2}_{j=l} (N_{j-l, j}) + ((-1)^{l-1} +1) f_{d-l-1} + ((-1)^{d-1} + (-1)^{l}) f_{d-l} \\
&+ \sum^{d-l-2}_{k=0} \left[(-1)^{d-k} N_{k,d-l}\right] + \sum^{d-2}_{j = d-l+1} ((-1)^{d-j} N_{d-l-1, j})) \\
&+ (-z)^{d-1} (((-1)^{d} + 1)f_0 +  f_1 + 2(-1)^{d-1} + 1) f_{1} + f_{d-2} \\
&+ \sum^{d-2}_{j = 2} \left[N_{0, j} (-1)^{d+j}\right]) + (-z)^{d} \left[f_{d-1} + f_0 \right] + (-z)^{d+1}.
 \end{align*}
\end{theorem}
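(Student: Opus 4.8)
The plan is to evaluate the expansion of $\mathcal{M}_P(z)$ in terms of the $N_{i,j}$ given at the start of this section, namely $\mathcal{M}_P(z)=\sum_{l=0}^{d+1}\big(\sum_{j-i=l}N_{i,j}\big)(-z)^l$, by first pinning down every $N_{i,j}$. These split into three families. The ones with $i=j$, with $i=-1$, or with $j=d$ equal the appropriate $f_r$, as already noted. The ones with $0\le i$ and $i+2\le j\le d-2$ are exactly the values supplied in the hypothesis $0<i+1<j<d-1$. The remaining unknowns are precisely $\{N_{i,i+1}:0\le i\le d-2\}\cup\{N_{i,d-1}:0\le i\le d-2\}$, a set of $2d-3$ elements, and Lemma~\ref{System} provides $2d-2$ linear relations among the $N_{i,j}$, one of which (equation~(i) with $j=d-1$) is redundant, so we expect exactly enough information to recover these unknowns.

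I would solve the system by back-substitution in two passes. Pass one uses equation~(i) of Lemma~\ref{System}: for each fixed $j$ with $1\le j\le d-2$, every term $N_{i,j}$ with $i\le j-2$ is either read off the $f$-vector or supplied by hypothesis, so the only remaining unknown in that relation is $N_{j-1,j}$, which it therefore determines; ranging over $j$ yields $N_{i,i+1}$ for $0\le i\le d-3$ (in particular $N_{0,1}=2f_1$), each expressed as $f$-terms plus a signed partial sum of supplied $N_{k,i+1}$'s. Pass two uses equation~(ii): for each fixed $i$ with $0\le i\le d-2$, the term $N_{i,i+1}$ is now known, the terms $N_{i,j}$ with $i+2\le j\le d-2$ are supplied, so the only remaining unknown is $N_{i,d-1}$, which it determines (in particular $N_{d-2,d-1}=2f_{d-2}$). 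Since each relation used fixes exactly one new variable, this also proves the linear independence of the $2d-3$ equations asserted in the text just before the theorem.

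The final step is to substitute these closed forms back into the expansion and collect the coefficient of each $(-z)^l$. The boundary layers $l=0$ (equal to $\sum_{i=-1}^d f_i$), $l=1$, $l=d-1$, $l=d$ (equal to $f_{d-1}+f_0$), and $l=d+1$ (equal to $1$) are treated separately; for $2\le l\le d-2$ one splits $\sum_{j-i=l}N_{i,j}$ into the boundary terms $f_{l-1}=N_{-1,l-1}$ and $f_{d-l}=N_{d-l,d}$, the supplied interior terms $\sum_{j=l}^{d-2}N_{j-l,j}$, and the single pass-two term $N_{d-1-l,d-1}$. Expanding $N_{d-1-l,d-1}$ by pass two, and then further expanding its $N_{d-1-l,d-l}=N_{d-1-l,(d-1-l)+1}$ summand by pass one, produces the $f$-contributions $((-1)^{l-1}+1)f_{d-l-1}$ and $((-1)^{d-1}+(-1)^l)f_{d-l}$ together with the double sums $\sum_{k=0}^{d-l-2}(-1)^{d-k}N_{k,d-l}$ and $\sum_{j=d-l+1}^{d-2}(-1)^{d-j}N_{d-l-1,j}$; the analogous reorganization of the pass-one data in the $l=1$ layer and the pass-two data in the $l=d-1$ layer produces $\sum_{0\le k<i\le d-3}(-1)^{i+1-k}N_{k,i+1}$ and $\sum_{j=2}^{d-2}(-1)^{d+j}N_{0,j}$.

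The main obstacle is this last bookkeeping: tracking which $N_{i,j}$ contributes to which power of $z$, carrying all the alternating signs correctly through the nested substitutions, and verifying that the partial sums reassemble into exactly the double sums in the statement. Two consistency checks keep the computation honest: the discarded relation---equation~(i) with $j=d-1$, equivalently the Euler relation of Lemma~\ref{eulerchar} applied to $P$ itself---must hold automatically for the solution produced, and specializing to $d=3$, where the hypothesis supplies no $N_{i,j}$, must reproduce Lemma~\ref{three}.
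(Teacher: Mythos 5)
Your proposal is correct and follows essentially the same route as the paper's proof: solve the system of Lemma~\ref{System} for the unknown $N_{i,i+1}$ and $N_{i,d-1}$ by back-substitution (using the given interior $N_{i,j}$ and the $f$-vector), then expand $\mathcal{M}_P(z)=\sum_l\big(\sum_{j-i=l}N_{i,j}\big)(-z)^l$ layer by layer in $l$. Your explicit accounting of which $2d-3$ values are unknown and your observation that the back-substitution itself establishes the linear independence claimed before the theorem are slightly cleaner than the paper's presentation, but the argument is the same.
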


\begin{proof}
We begin by solving for the missing $N_{i,j}$ values.

Part $(i)$ of Lemma~$\ref{System}$ gives us
$$\sum_{i=0}^{j-1} (-1)^i N_{i, j} = ((-1)^{j+1}+1) f_j$$
for $1 \le j \le d-1$. It follows that $N_{0,1} = 2f_1$, and that for $1 \le i \le d-2$
$$\displaystyle N_{i, i+1} = ((-1)^{i} +1) f_{i+1} + \sum^{i-1}_{k=0} (-1)^{i+1-k} N_{k,i+1}.$$

Part $(ii)$ of Lemma~$\ref{System}$ gives us
$$\displaystyle \sum_{j=i+1}^{d-1} (-1)^{j-i-1} N_{i, j} = ((-1)^{d-i}+1) f_i$$ 
for $0 \le j \le d-2$. This gives us 
$$\displaystyle N_{i, d-1} = ((-1)^{d-i} + 1) f_i + \sum^{d-2}_{j = i+1} (-1)^{d-j} N_{i,j}.$$
Using our values for $N_{i,i+1}$, we obtain
$$\displaystyle N_{0, d-1} = ((-1)^{d} + 1) f_0 + (-1)^{d-1} 2f_1 + \sum^{d-2}_{j = 2} (-1)^{d-j} N_{0,j}$$
and
\begin{align*}
N_{i, d-1} &= ((-1)^{d-i} + 1) f_i + ((-1)^{d-1} + (-1)^{d-i-1}) f_{i+1} \\
&+ \sum_{k=0}^{i-1} (-1)^{d-k} N_{k, i+1} + sum^{d-2}_{j = i+2} (-1)^{d-j} N_{i,j}
\end{align*}
for $0 < i <d-2$.
We calculate $N_{d-2, d-1}$ using $(ii)$
$$\displaystyle N_{d-2, d-1} = (-1)^{(d-1) - (d-2)} (-1) ((-1)^{d - (d-2)} + 1) f_{d-2} = 2f_{d-2}.$$
We will now use these values to calculate $\mathcal{M}_P (z)$. We know 
$$\displaystyle \mathcal{M}_P (z) = \sum_{0 \le l \le d+1} \sum_{j-i = l} N_{i, j} (-z)^{l}.$$
We will calculate $\sum_{j-i = l} N_{i, j}$ for $l$ from $0$ to $d+1$. \\

For $l=0$, we have
$$\displaystyle \sum_{j-i = 0} N_{i, j} = \sum_{i=-1}^{d} N_{i, i}\sum_{i=-1}^d f_i.$$

For $l=1$, we have
\begin{align*}
\sum_{j-i = 1} N_{i, j} &= N_{-1, 0} + N_{0,1} + \sum^{d-3}_{i=0} N_{i, i+1} + + N_{d-2, d-1} + N_{d-1,d} \\
&= f_0 + 2f_1 + 2f_{d-2} + f_{d-1} + \sum_{i=0}^{d-3} (((-1)^{i} +1) f_{i+1} \\ 
&+ \sum^{i-1}_{k=0} (-1)^{i+1-k} N_{k,i+1}) \\
&=f_0 + 2f_1 + 2f_{d-2} + f_{d-1} + \sum_{i=0}^{d-3} ((-1)^{i} +1) f_{i+1}) \\
&+ (\sum_{0 \le k < i \le d-3} (-1)^{i+1-k} N_{k,i+1}).
\end{align*}

For $2 \le l \le d-2$, we have
\begin{align*} 
\sum_{j-i = l} N_{i, j} &= N_{-1, l-1} + \sum^{d-2}_{j=l} N_{j-l, j} + N_{d-l-1, d-1} + N_{d-l, d} \\
&= f_{l-1} + f_{d-l} + \sum^{d-2}_{j=l} (N_{j-l, j}) + ((-1)^{l-1} +1) f_{d-l-1} + ((-1)^{d-1} \\
&+ (-1)^{l}) f_{d-l} + \sum^{d-l-2}_{k=0} \left[(-1)^{d-k} N_{k,d-l}\right] \\
&+ \sum^{d-2}_{j = d-l+1} ((-1)^{d-j} N_{d-l-1, j}).
\end{align*}

For $l = d-1$, we have
\begin{align*}
\sum_{j-i = d-1} N_{i, j} &= N_{-1, d-2} + N_{0, d-1} + N_{1, d} \\
&= f_1 + f_{d-2} + ((-1)^{d} + 1)f_0 + ((-1)^{d-1}) 2f_{1} \\
&+ \sum^{d-2}_{j = 2} \left[N_{0, j} (-1)^{d+j}\right] \\
&= ((-1)^{d} + 1)f_0 +  f_1  + (2(-1)^{d-1} + 1) f_{1} + f_{d-2} \\
&+ \sum^{d-2}_{j = 2} \left[N_{0, j} (-1)^{d+j}\right]. 
 \end{align*}
 
 For $l = d$, we have
$$\displaystyle \sum_{j-i = d} N_{i, j} = N_{-1, d-1} + N_{0, d} = f_{d-1} +
 f_0.$$

 For $l = d+1$, we have
$$\displaystyle \sum_{j-i = d+1} N_{i, j} = N_{-1, d} = 1.$$

 It follows that 
\begin{align*}
\mathcal{M}_P (z) &= \sum_{i=-1}^d f_i - z (f_0 + 2f_1 + 2f_{d-2} + f_{d-1} + \sum_{i=0}^{d-3} ((-1)^{i} +1) f_{i+1}) \\
&+ (\sum_{0 \le k < i \le d-3} (-1)^{i+1-k} N_{k,i+1})) + \sum_{l=2}^{d-2} (-z)^l (f_{l-1} + f_{d-l} \\
&+ \sum^{d-2}_{j=l} (N_{j-l, j}) + ((-1)^{l-1} +1) f_{d-l-1} + ((-1)^{d-1} + (-1)^{l}) f_{d-l} \\
&+ \sum^{d-l-2}_{k=0} \left[(-1)^{d-k} N_{k,d-l}\right] + \sum^{d-2}_{j = d-l+1} ((-1)^{d-j} N_{d-l-1, j})) \\
&+ (-z)^{d-1} (((-1)^{d} + 1)f_0 +  f_1 + 2(-1)^{d-1} + 1) f_{1} + f_{d-2} \\
&+ \sum^{d-2}_{j = 2} \left[N_{0, j} (-1)^{d+j}\right]) + (-z)^{d} \left[f_{d-1} + f_0 \right] + (-z)^{d+1}.
 \end{align*}
\end{proof}
\bibliographystyle{plain}
\bibliography{MobiusPolynomialsPaper}

\begin{thebibliography}{1}

\bibitem{DirectProductsPaper}
S.~{Durst}.
\newblock {M\"obius Polynomials and Splitting Algebras of Direct Products of
  Posets}.
\newblock {\em ArXiv e-prints}, October 2014.

\bibitem{AlgebrasAssocToDirGraphs}
Israel Gelfand, Vladimir Retakh, Shirlei Serconek, and Robert Wilson.
\newblock On a class of algebras associated to directed graphs.
\newblock {\em Selecta Mathematica, New Series}, 11:281--295, 2005.
\newblock 10.1007/s00029-005-0005-x.

\bibitem{HilbertSeriesPaper}
Vladimir Retakh, Shirlei Serconek, and Robert~Lee Wilson.
\newblock Hilbert series of algebras associated to directed graphs.
\newblock {\em Journal of Algebra}, 312(1):142 -- 151, 2007.

\bibitem{GenLayeredGraphs}
Vladimir Retakh and Robert~Lee Wilson.
\newblock Algebras associated to acyclic directed graphs.
\newblock {\em Advances in Applied Mathematics}, 42(1):42 -- 59, 2009.

\bibitem{stein}
G.~M. {Ziegler}.
\newblock {Convex Polytopes: Extremal Constructions and f-Vector Shapes}.
\newblock {\em ArXiv Mathematics e-prints}, November 2004.

\bibitem{lec}
Gunter~M. Ziegler.
\newblock {\em Lectures on Polytopes}.
\newblock Springer, New York, 2013.

\end{thebibliography}
\end{document}